\documentclass[a4paper,onecolumn,oneside,10pt]{article}

\usepackage{geometry}
\usepackage[x11names,usenames,dvipsnames]{xcolor} 
\usepackage{tikz}
\usepackage[T1]{fontenc}
\usepackage{comment}
\usepackage{lineno, amsthm}    
\usepackage{mathtools}
\usepackage{mathdots}
\usepackage{enumerate}
\usepackage{textcomp}
\usepackage{amssymb,amsfonts}
\usepackage{amsmath,paralist,enumitem,mathrsfs,graphicx}
\usepackage{epstopdf}
\usepackage{pgfplots}

\usetikzlibrary{pgfplots.dateplot} 
\renewcommand{\qedsymbol}{$\blacksquare$}
\newcommand\restr[2]{{
		\left.\kern-\nulldelimiterspace 
		#1 
		\vphantom{\big|} 
		\right|_{#2} 
}}
\makeatletter
\newcommand{\vas}{\bBigg@{3}}
\newcommand{\vast}{\bBigg@{4}}
\newcommand{\Vast}{\bBigg@{5}}
\makeatother
\newcommand{\upperRomannumeral}[1]{\uppercase\expandafter{\romannumeral#1}}
\newcommand{\lowerRomannumeral}[1]{\lowercase\expandafter{\romannumeral#1}}
\usepackage{indentfirst}
\usepackage[unicode]{hyperref}
\usepackage{appendix}
\newcommand{\dd}{\mathrm{d}}

\usepackage{epstopdf}
\usepackage[caption=false]{subfig}
\theoremstyle{plain}
\newtheorem{theorem}{Theorem}
\newtheorem{hyp}{Hypothesis}
\newtheorem{lemma}[theorem]{Lemma}
\newtheorem{corollary}[theorem]{Corollary}
\newtheorem{proposition}[theorem]{Proposition}

\theoremstyle{definition}
\newtheorem{rem}{Remark}

\modulolinenumbers[2]
\newenvironment{myproof}[1] {{
		
		\noindent\it{Proof of {#1}}. }}{\hfill\qedsymbol}
\makeatletter
\def\ps@pprintTitle{%
	\let\@oddhead\@empty
	\let\@evenhead\@empty
	\def\@oddfoot{\footnotesize\itshape
		\ifx\@empty\@empty
		\else\@journal\fi\hfill\today}%
	\let\@evenfoot\@oddfoot
}
\makeatother

\makeatletter
\let\save@mathaccent\mathaccent
\newcommand*\if@single[3]{%
	\setbox0\hbox{${\mathaccent"0362{#1}}^H$}%
	\setbox2\hbox{${\mathaccent"0362{\kern0pt#1}}^H$}%
	\ifdim\ht0=\ht2 #3\else #2\fi
}
\newcommand*\rel@kern[1]{\kern#1\dimexpr\macc@kerna}
\newcommand*\widebar[1]{\@ifnextchar^{{\wide@bar{#1}{0}}}{\wide@bar{#1}{1}}}
\newcommand*\wide@bar[2]{\if@single{#1}{\wide@bar@{#1}{#2}{1}}{\wide@bar@{#1}{#2}{2}}}
\newcommand*\wide@bar@[3]{%
	\begingroup
	\def\mathaccent##1##2{%
		\let\mathaccent\save@mathaccent
		\if#32 \let\macc@nucleus\first@char \fi
		\setbox\z@\hbox{$\macc@style{\macc@nucleus}_{}$}%
		\setbox\tw@\hbox{$\macc@style{\macc@nucleus}{}_{}$}%
		\dimen@\wd\tw@
		\advance\dimen@-\wd\z@
		\divide\dimen@ 3
		\@tempdima\wd\tw@
		\advance\@tempdima-\scriptspace
		\divide\@tempdima 10
		\advance\dimen@-\@tempdima
		\ifdim\dimen@>\z@ \dimen@0pt\fi
		\rel@kern{0.6}\kern-\dimen@
		\if#31
		\overline{\rel@kern{-0.6}\kern\dimen@\macc@nucleus\rel@kern{0.4}\kern\dimen@}%
		\advance\dimen@0.4\dimexpr\macc@kerna
		\let\final@kern#2%
		\ifdim\dimen@<\z@ \let\final@kern1\fi
		\if\final@kern1 \kern-\dimen@\fi
		\else
		\overline{\rel@kern{-0.6}\kern\dimen@#1}%
		\fi
	}%
	\macc@depth\@ne
	\let\math@bgroup\@empty \let\math@egroup\macc@set@skewchar
	\mathsurround\z@ \frozen@everymath{\mathgroup\macc@group\relax}%
	\macc@set@skewchar\relax
	\let\mathaccentV\macc@nested@a
	\if#31
	\macc@nested@a\relax111{#1}%
	\else
	\def\gobble@till@marker##1\endmarker{}%
	\futurelet\first@char\gobble@till@marker#1\endmarker
	\ifcat\noexpand\first@char A\else
	\def\first@char{}%
	\fi
	\macc@nested@a\relax111{\first@char}%
	\fi
	\endgroup
}

\geometry{left=2.25cm, right=2.25cm, top=3.4cm, bottom=2.9cm, head=1cm, headsep=1cm, foot=1cm} 

\title{Feller's test for explosions of stochastic Volterra equations\footnote{The first author acknowledges financial support from the research program \emph{Chaire Statistics and models for Regulation} (Fondation de l'\'Ecole polytechnique and \'Ecole polytechnique). The work of the second author benefited from the financial support of the research program \emph{Chaire Deep learning in finance and Statistics} (Fondation de l'\'Ecole polytechnique, \'Ecole polytechnique and Qube R\&T).}}

\author{Alessandro Bondi\thanks{Centre de Mathématiques Appliquées (CMAP), \'Ecole polytechnique, Institut Polytechnique de Paris.\\ Email:  alessandro.bondi@polytechnique.edu}\and Sergio Pulido\thanks{Universit\'e Paris-Saclay, CNRS, ENSIIE, Univ \'Evry, Laboratoire de Math\'ematiques et Mod\'elisation d'\'Evry (LaMME).\\ Email: sergio.pulidonino@ensiie.fr}}

\pgfplotsset{compat=1.18}
\begin{document}
	\maketitle
\begin{abstract}
	This paper provides a Feller's test for explosions of one-dimensional continuous stochastic Volterra processes of convolution type. The study focuses on dynamics governed by nonsingular kernels, which preserve the semimartingale property of the processes and introduce memory features through a path-dependent drift. In contrast to the classical path-independent case, the sufficient condition derived in this study for a Volterra process to remain in the interior of an interval is generally more restrictive than the necessary condition.	
	The results are illustrated with three specifications of the dynamics:  the Volterra square-root diffusion, the Volterra Jacobi process and the  Volterra power-type diffusion. For the Volterra square-root diffusion, also known as the Volterra CIR process, the paper presents a detailed discussion on the approximation of the singular fractional kernel with a sum of exponentials, a method commonly employed in the mathematical finance literature.
	\vspace{2mm}
	\\
	{\bf MSC2020:} 60H20; 45D05; 60K50
\vspace{1mm}\\	
	{\bf Keywords:} stochastic Volterra equations; boundary attainment conditions; stochastic invariance; Feller's test for explosions

\end{abstract}	
	\section{Introduction}\label{sec_Intro}
	Consider a stochastic basis $(\Omega,\mathcal{F},\mathbb{P};\mathbb{F}=(\mathcal{F}_t)_{t\ge0}),$ where the filtration $\mathbb{F}$ satisfies the usual conditions, endowed with a standard one-dimensional $\mathbb{F}-$Brownian motion $W=(W_t)_{t\ge0}$. Given two (possibly infinite) numbers $l,\,r$ such that $-\infty\le  l < r\le \infty $, define an open interval $I=(l,r)$ in the extended real line $\widebar{\mathbb R}$. Suppose that  a continuous process $X=(X_t)_{t\ge 0}$, starting at a deterministic point $X_0=x_0\in I$ and  taking values in the closed interval $\widebar {I}$, has the following property: for every $\lambda,\rho \in I$ such that $x_0\in (\lambda, \rho)$, defining $\tau_\lambda^\rho=\inf\{t\ge0 : X_t=\lambda \text{ or } X_t=\rho\}$, the stopped process $X^{\tau_\lambda^\rho}={(X_{t\wedge \tau_\lambda^\rho})}_{t\ge 0}$ solves the one-dimensional stochastic Volterra equation 
	\begin{equation}\label{eq:1intro}
		X_{t\wedge \tau_\lambda^\rho}=x_0+\int_{0}^{t}1_{\{s\le \tau_\lambda^\rho\}}K(t-s)b(X_{s\wedge \tau_\lambda^\rho})\,\dd s+ \int_{0}^{t}1_{\{s\le \tau_\lambda^\rho\}}K(t-s)\sigma(X_{s\wedge \tau_\lambda^\rho})\,\dd W_s,
		\quad t\ge 0, \,\mathbb{P}-\text{a.s.}
	\end{equation}
Here, $K\colon \mathbb{R}_+\to \mathbb{R}$ is a smooth  kernel ($K\in C^1(\mathbb{R}_+)$) which introduces  memory features in the equation.   
Under additional hypotheses on $K$, the objective of this paper is to determine boundary attainment conditions for the process $X$. More specifically, defining the $\mathbb{F}-$stopping time $S$ by  $$S=\inf\{t\ge 0 : X_t\in \{l,r\}\},$$ the aim is to establish a test based on the initial point $x_0$ and the parameters of the dynamics \eqref{eq:1intro} -- that is,  $K,\,b$ and $\sigma$ -- to deduce whether $\mathbb{P}(S=\infty)=1$ or not.  Notice that, by the continuity of $X$, $S$ represents the first time that the trajectories of $X$ attain $l$ or $r$. In case $l=-\infty$ and $r=\infty$, $S$ is typically called the first blow-up  (or explosion) time  of  $X$.\\  In other words, given a continuous $\widebar I-$valued process $X$ which is a local solution in the random interval $[0,S)$ (in the sense of \eqref{eq:1intro}) of the stochastic Volterra equation 
	\begin{equation*}
	X_{t}=x_0+\int_{0}^{t}K(t-s)b(X_s)\,\dd s+ \int_{0}^{t}K(t-s)\sigma(X_s)\,\dd W_s,
	\quad t\in [0,S), \,\mathbb{P}-\text{a.s.},
\end{equation*}
this work proposes to study necessary and sufficient conditions for $X$ to be also a global, $I-$valued solution. 

\vspace{2mm}

In the classical path-independent case $K\equiv K(0)$, when \eqref{eq:1intro} reduces to an It\^o diffusion, this problem has long been solved thanks to  \cite{WF}, with a strategy which has become known as \emph{Feller's test for explosions}, see also \cite[Theorem 5.29, Chapter 5]{KS}. In this approach, for a generic $c\in I$, the model parameters $K(0),\,b$ and $\sigma$ are used to define an auxiliary function $v_c\colon I \to \mathbb{R_+}$ which is then employed to establish the following equivalence:
\begin{equation}\label{eq:intro_cla}
	\mathbb{P}(S=\infty) \quad \Longleftrightarrow\quad v_c(l+)=v_c(r-)=\infty.
\end{equation}
Perhaps the most famous application of this result is the so-called \emph{Feller condition} for square-root diffusions, which have been extensively used, for instance, in mathematical finance to model interest rates \cite{CIR} and asset variance \cite{HE}. \vspace{2mm}
\\
In order to treat the case of a nonconstant convolution kernel $K\in C^1(\mathbb{R}_+)$, this paper generalizes the procedure carried out in the standard It\^o framework, making it feasible to analyze the path-dependence features of  \eqref{eq:1intro}. The foundation of the arguments developed here is the fact that, since $K(0)<\infty$, the stopped process $X^{\tau_\lambda^\rho}$ is a continuous semimartingale. In fact, it is possible to express its dynamics \eqref{eq:1intro} in a differential form where the It\^o diffusive component is separated from the path-dependent one. The path-dependent component is given in terms of the measure resolvent of the first kind $L$ of $K$, which solves the convolution identity $K\ast L=1$. This formulation allows the application of It\^o's formula, an essential tool for the proposed methodology. \\
The introduction, for every $\beta,\,\gamma\in\mathbb{R}$, of a modified drift function $\tilde{b}_c(\cdot;\beta,\gamma)\colon I\to \mathbb{R}$ defined by 
	\[
	\tilde{b}_c(x;\beta,\gamma)=K(0)b(x)+\frac{K'(0)}{K(0)}\left(x+1_{\{x<c\}}\beta+1_{\{x\ge  c\}}\gamma\right),\quad x\in I,\text{ for some }c\in I,
	\]
can be considered as the main idea of this manuscript. Indeed, roughly speaking, a suitable choice of the  shift parameters $\beta $ and $\gamma$, along with monotonicity assumptions on the convolution $K'\ast L$, enables to control the sign of the path-dependent component of \eqref{eq:1intro}, which turns out to be crucial for the analysis of $\mathbb{P}(S=\infty)$. Note that, in the path-independent case when $K'(0)=0$, $\tilde{b}_c(\cdot;\beta,\gamma)$ reduces to the drift map $b$ because the dependence on $\beta$, $\gamma$ and $c$ is lost.\\
 Employing $\tilde{b}_c(\cdot;\beta,\gamma)$, by analogy with the classical case, an auxiliary function $v_c(\cdot;\beta,\gamma)\colon I\to\mathbb{R}_+$ can be defined to study necessary and sufficient conditions for $\mathbb{P}(S=\infty)$. More precisely, given $c\in I$, Theorem \ref{Feller_thm_nec} shows that 
\begin{equation}\label{intro_nec}
	v_c(l+;-\beta,-\gamma)=v_c(r-;-\beta,-\gamma)=\infty, \text{ for every } \gamma<x_0<\beta,
\end{equation}
is a necessary condition for $\mathbb{P}(S=\infty)$. Meanwhile, Theorem \ref{Feller_thm}   asserts that, given two real-valued  sequences $(\lambda_n)_n$ and $(\rho_n)_n$ such that $\lambda_n\downarrow l$ and $\rho_n\uparrow r$ as $n\to \infty$,  it is sufficient that
\begin{equation}\label{intro_suf}
\lim_{n\to \infty}v_c(\lambda_n; -\lambda_n, -\rho_n)
=
\lim_{n\to\infty}v_c(\rho_n; -\lambda_n, -\rho_n)=\infty	
\end{equation}
to infer $\mathbb{P}(S=\infty)$. Contrary to the equivalence in \eqref{eq:intro_cla} which holds when $K\equiv K(0)$, in general, the requirement in \eqref{intro_suf} is stronger than the one in  \eqref{intro_nec}, see Lemma \ref{le_dip_betagamma}. However, the conditions \eqref{intro_nec}-\eqref{intro_suf} coincide when $I$ is bounded under further assumptions on $\sigma$, as detailed in Corollary \ref{coro_nec_suff}. These additional assumptions allow to choose $\beta$ and $\gamma$ equal to zero, bringing the setting back to the standard Feller's test formulation.

\vspace{2mm}
Stochastic Volterra equations appear in various fields of applied mathematics. For example, in biological and chemical models describing the interactions of two substances (called reactant and catalyst), they arise from and are motivated by scaling limits of branching processes, see \cite{ABI_j, MS}. 
They are also employed in volatility modeling to incorporate different levels of regularity of the trajectories and path-dependence structures, see, e.g., \cite{rhh, rh2} and references therein. Moreover, Volterra-type processes have been used in \cite{BBV1} for the stochastic modeling of energy markets,  see also \cite{BBV2}.
\\
The boundary attainment conditions obtained in this work contribute to expanding the theory of invariance for stochastic Volterra equations. For some time, progress in this research area was limited to \cite[Theorem 3.6]{sergio}, which, in a suitable setting,  proves the existence of $\mathbb{R}^d_+-$valued  solutions and  can be applied, e.g., to study Volterra square-root diffusion dynamics (see also \cite{rh2}). Recently, however, such invariance theory has garnered renewed interest. Notably, under Lipschitz continuity assumptions on the drift and diffusion maps, \cite[Theorem 3.2]{AKsolo}  shows the invariance of closed convex subsets of $\mathbb{R}^d$ for nonsingular kernels $K$ that preserve nonnegativity. This result also covers singular completely monotone kernels. 
Furthermore, \cite[Theorem 5.1]{Pol_sergio} establishes the existence of  weak continuous solutions to stochastic Volterra equations with singular kernels taking values in the multidimensional unit ball.  Here,  weaker regularity assumptions on the coefficients -- namely, continuity and linear growth -- are sufficient. This result can be applied to certain polynomial Volterra processes, including the Volterra Jacobi process, which is also examined in this paper in Subsection~\ref{subsection_JV}. The present article is the first study of invariance properties of stochastic Volterra equations over open sets. \\
It is worth noticing that the arguments of this manuscript focus on one-dimensional dynamics and require  the crucial hypothesis that $K(0)<\infty$, i.e., that the kernel $K$ does not blow up at $0$.  When $K(0+)=\infty$,  it is unclear whether the $I-$boundary attainment of a local solution $X$ to the corresponding stochastic Volterra equation (which is not a semimartingale anymore) can be inferred using \eqref{intro_nec}-\eqref{intro_suf} via stability techniques. Such techniques, employed for instance in \cite[Theorem 3.2]{AKsolo} to handle a singular completely monotone kernel, consist of  approximating  $K$ with a sequence of nonexplosive kernels $K^{(n)}$ (e.g., in the  $L^2_{\text{loc}}(\mathbb{R}_+)-$sense) such that the associated solutions $X^{(n)}$ converge to $X$ in law,  in the spirit of \cite[Theorem 3.5]{ABE1}.  In fact, for this strategy to work, given that the domain of interest is an open set, additional information might be needed on $\inf_{0\le t< S}X_t^{(n)}$ or $\sup_{0\le t< S}X_t^{(n)}$, as well as  on the distribution and convergence of the sequence of first boundary hitting times of $(X^{(n)})_n$. However, such results are not provided here or in the existing literature and might be too ambitious to achieve, rendering the stability approach unfeasible. Focusing  on explosions to $\pm\infty$ of local maximal solutions (whose existence is analyzed, for example, in \cite[Subsection 3.3]{Zhang}), when $K$ is the fractional kernel $K(t)=\Gamma(\alpha)^{-1}t^{\alpha-1}$ with $\alpha\in(0,1)$, an Osgood-type criterion for stochastic Volterra equations driven by additive (that is, $\sigma=1$) Riemann-Liouville  fractional Brownian motions  has been established in \cite{BPT}. On the other hand, the case of a more general multiplicative noise appears to be completely open and could be the scope of a future research. 

\vspace{2mm}
The paper is organized as follows. Section \ref{main} sets the stage by imposing suitable assumptions on the kernel $K$ and the coefficients $b$ and $\sigma$, and by deducing a differential expression for the dynamics \eqref{eq:1intro} of the stopped process $X^{\tau_\lambda^\rho}$, see Lemma \ref{le_semi}. Moreover, by analogy with the classical path-independent case, it introduces a so-called scale function $p_c(\cdot; \beta,\gamma)\colon I\to \mathbb{R}$ and the map $v_c(\cdot; \beta,\gamma)$ discussed above. Then, Subsection~\ref{suff_scalep} shows how to employ $p_c(\cdot;\beta,\gamma)$ to derive information on the $\inf$ and the $\sup$ of $X$ in the random interval $[0,S)$. Subsection~\ref{subsec_v} contains the main contribution of this work, which is a Feller-type test for the $I-$boundary attainment of $X$. This test is divided  into a necessary condition and a sufficient one for $\mathbb{P}(S=\infty)$, which, in general, do not coincide. Such conditions are presented in Theorems~\ref{Feller_thm_nec}-\ref{Feller_thm}. Section \ref{sec_ex} applies the theoretical results of Section \ref{main} to three specifications of \eqref{eq:1intro} which are frequent in the literature. In order of appearance, they are: the Volterra CIR process, the Volterra Jacobi process and the  Volterra power-type diffusion process. Each example is investigated in a separate subsection. In particular, Subsection~\ref{subsection_CIR} treats  Volterra square-root diffusions and the approximation of the fractional kernel with nonsingular kernels, a common practice in  mathematical finance.
Finally, Appendix \ref{Appendix_tech} contains the proofs of some technical results about the dependence of $p_c(\cdot;\beta,\gamma)$ and $v_c(\cdot;\beta,\gamma)$ on  $\beta,\,\gamma$ and $c$.

	\section{Boundary attainment conditions for stochastic Volterra equations}\label{main}
	We let $\mathbb{R}_+=[0,\infty)$ and denote by $\delta_a(\dd t)$ the Dirac measure on $\mathbb{R}_+$ centered in $a\in \mathbb{R}_+$. For a right-continuous function $f\colon \mathbb{R}_+\to \mathbb{R}$ of locally bounded variation, we let $\dd f(t)$ be the  Lebesgue-Stieltjes measure on the $\delta-$ring  $\cup_{n\in \mathbb{N}}\mathcal{B}([0,n])$ associated with $f$. By convention, $\dd f(\{0\})=0$.
	
	In this paper, we consider a kernel $K\colon\mathbb{R}_+\to\mathbb{R}_+$ satisfying the following requirement.
	\begin{hyp}\label{Hyp1}
		The kernel $K$ belongs to $C^1(\mathbb{R}_+)$, with $K'\in {\emph{BV}_\emph{loc}}(\mathbb{R}_+),$ and admits a nonnegative measure resolvent of the first kind $L$, that is, a nonnegative measure $L$ solving (almost everywhere) the convolution identity $K\ast L=1$.
	\end{hyp}
Note that, by Lebesgue's decomposition and dominated convergence theorems, the map $t\mapsto    L([0,t])$ is right-continuous in $\mathbb{R}_+$. Therefore, the convolution $K\ast L$ is right-continuous in $\mathbb{R}_+$, cfr. \cite[Corollary 6.2, Chapter 3]{gp}, and  the resolvent identity yields $L(\{0\})=K(0)^{-1}.$ By definition, $L$ is finite on compact sets, hence $K(0)>0$.\vspace{2mm}

	Define the open interval $I=(l,r)$, with $l,\,r\in \widebar{\mathbb{R}}$ such that $-\infty\le l<r\le\infty$. Take   a decreasing sequence  $(l_n)_n\subset \mathbb{R}$ and an increasing sequence $(r_n)_n\subset \mathbb{R}$ such that $l_n<r_n$, for every $n\in\mathbb{N}$, and 
	\[
		\lim_{n\to \infty}l_n=l,\qquad \lim_{n\to \infty}r_n=r.
	\]
	Letting $I_n=(l_n,r_n)$, it is clear that  $I=\cup_nI_n$.\vspace{2mm}\\
	 On a complete filtered probability space $(\Omega, \mathcal{F}, \mathbb{P};\mathbb{F}=(\mathcal{F}_t)_{t\ge0})$ where the filtration $\mathbb{F}$ satisfies the usual conditions, we consider an ($\mathbb{F}-$)adapted, $\widebar{I}=[l,r]-$valued continuous process $X=(X_t)_{t\ge 0}$ starting from $x_0\in I$. We define $S_n=\inf\{t\ge 0 : X_t\not\in I_n\}$ and $S=\inf\{t\ge 0 : X_t\not\in I\}$. Observe that $(S_n)_n$ is an increasing sequence of ($\mathbb{F}-$)stopping times converging to $S$. Given  a measurable drift $b\colon I\to \mathbb{R}$ such that 
		\begin{equation}\label{def_sol}
		\mathbb{P}\left(
		\int_{0}^{t\wedge S}|b(X_s)| \dd s<\infty
		\right)=1,	\quad t\ge0,
	\end{equation} 
	and
	a measurable locally bounded diffusion $\sigma\colon I\to\mathbb{R}$,	we suppose that $X$ satisfies the following stochastic Volterra equation (henceforth, SVE):
	\begin{equation}\label{SDE_VolterraC1}
		X_t=x_0+\int_{0}^{t}1_{\{s\le S_n\}}K(t-s)b(X_s)\,\dd s+ \int_{0}^{t}1_{\{s\le S_n\}}K(t-s)\sigma(X_s)\,\dd W_s,\quad  t\in [0,S_n],\, \mathbb{P}-\text{a.s.},
	\end{equation} 
for every $n\in \mathbb{N}$.
 Here $W=(W_t)_{t\ge 0}$ is a standard one--dimensional Brownian motion. We remark  that the integral processes appearing in the right-hand side of \eqref{SDE_VolterraC1} are well defined and continuous in $\mathbb{R}_+$. Indeed, the continuity of the integral in the Lebesgue measure follows from \eqref{def_sol}, \cite[Corollary 2.3, Chapter 2]{gp} and the continuity of $K$ in $\mathbb{R}_+$. As for the stochastic integral process, the continuity of its trajectories is given by  \cite[Lemma 2.4]{sergio}, which can be applied because $1_{\{\cdot\le S_n\}}X$ is bounded and $\sigma$ is locally bounded in $I$. Since the equality in \eqref{SDE_VolterraC1} holds in the random interval $[0,S_n]$, for every $n\in \mathbb{N}$, we say that 
 	\[
\text{ 	$X$ is a local solution of the SVE \,\, $X=x_0+K\ast b(X)+ K\ast (\sigma(X)\dd W)$\,\, in $[0,S)$},
 	\] 
 where $\ast$ denotes the convolution operation.\vspace{2mm}\\
 Our objective is to determine conditions, both  necessary and sufficient,  for $\mathbb{P}(S=\infty)=1$, namely, to ensure that $X$ does not exit from the interval $I$, see Subsection \ref{subsec_v}. Moreover, in Subsection \ref{suff_scalep}, we discuss sufficient conditions that can be employed to derive information on the $\sup$ and the $\inf$ of the solution process $X$ of \eqref{SDE_VolterraC1} in $[0,S)$. 
 \\
 Since $K(0)\in (0,\infty)$, i.e., the kernel $K$ does not explode at $0$, we can express the dynamics \eqref{SDE_VolterraC1} of $X$ in differential form, see Lemma \ref{le_semi}. This result is crucial for the subsequent arguments, as it shows that the stopped processes $X^{S_n},\,n\in\mathbb{N},$ are continuous semimartingales and enables us to use Itô's formula. 
 \begin{lemma}\label{le_semi}
 	For every $n\in \mathbb{N}$, the stopped process $X^{S_n}$ satisfies the following stochastic differential equation in $\mathbb{R}_+$:
 			\begin{multline}\label{SDE_lemma}
 				\dd X^{S_n}_t=1_{\{t\le S_n\}}\left[\left(K(0)b(X^{S_n}_t)+\frac{K'(0)}{K(0)}X^{S_n}_t\right)\dd t
 				+K(0)\sigma(X^{S_n}_t)\dd W_t\right]\\
 				+1_{\{t\le S_n\}}\left[\left(\dd\left(K'\ast L\right)\ast X^{S_n}\right)(t)-x_0\left(K'\ast L\right)(t)\right]\dd t
 				,
 			\end{multline}
 		 with initial condition $X^{S_n}_0=x_0.$
 \end{lemma}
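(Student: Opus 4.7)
The plan is to convert the Volterra equation into semimartingale differential form by combining integration by parts with the resolvent inversion $K\ast L=1$, in the spirit of standard manipulations for convolution-type stochastic equations.

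First, I would introduce the continuous semimartingale $\dd Z^n_s:=1_{\{s\le S_n\}}[b(X_s)\,\dd s+\sigma(X_s)\,\dd W_s]$, which satisfies $Z^n_0=0$ and is well defined thanks to \eqref{def_sol} and the local boundedness of $\sigma$, and rewrite \eqref{SDE_VolterraC1} as $X_t-x_0=\int_0^t K(t-s)\,\dd Z^n_s$ for $t\le S_n$. Since $s\mapsto K(t-s)$ is $C^1$ for each fixed $t$, classical integration by parts for stochastic integrals against a $C^1$ integrand yields
\[
\int_0^t K(t-s)\,\dd Z^n_s=K(0)Z^n_t+\int_0^t K'(t-s)Z^n_s\,\dd s.
\]

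Next, I would invert the convolution with $K$ by convolving the stopped SVE with $\dd L$. Using $(K\ast\dd L)(t)=1$ for $t>0$ and a stochastic Fubini exchange (justified by the local boundedness of $K$ on $[0,t]$, the finiteness of $L|_{[0,t]}$, and the boundedness of $1_{\{\cdot\le S_n\}}X$ and $\sigma\circ X$ on $[0,t]$), one obtains
\[
Z^n_s=\int_0^s(X^{S_n}_{s-u}-x_0)\,\dd L(u),\qquad s\le S_n.
\]
Substituting this identity into the $K'$-integral and swapping the order of integration collapses the double integral to the function convolution
\[
\int_0^t K'(t-s)Z^n_s\,\dd s=\int_0^t (K'\ast L)(t-s)(X^{S_n}_s-x_0)\,\dd s,\qquad t\le S_n.
\]
Under Hypothesis~\ref{Hyp1}, $K'\ast L$ is locally of bounded variation with $(K'\ast L)(0)=K'(0)L(\{0\})=K'(0)/K(0)$. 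Writing $(K'\ast L)(t-s)=(K'\ast L)(0)+\int_{(0,t-s]}\dd(K'\ast L)(v)$, exchanging integrals once more, and differentiating in $t$ then gives
\[
\tfrac{\dd}{\dd t}\!\int_0^t K'(t-s)Z^n_s\,\dd s=(K'\ast L)(0)(X^{S_n}_t-x_0)+\int_{(0,t]}(X^{S_n}_{t-v}-x_0)\,\dd(K'\ast L)(v).
\]

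Finally, expanding the brackets and exploiting the convention $\dd(K'\ast L)(\{0\})=0$, the two terms proportional to $(K'\ast L)(0)x_0=\tfrac{K'(0)}{K(0)}x_0$ cancel, leaving the path-dependent drift $\tfrac{K'(0)}{K(0)}X^{S_n}_t+(\dd(K'\ast L)\ast X^{S_n})(t)-x_0(K'\ast L)(t)$. Adding the It\^o differential $K(0)\,\dd Z^n_t=1_{\{t\le S_n\}}K(0)[b(X_t)\,\dd t+\sigma(X_t)\,\dd W_t]$ and observing that $X^{S_n}$ is constant for $t>S_n$ recovers \eqref{SDE_lemma}. The main obstacle I anticipate is the careful justification of the (stochastic) Fubini exchanges and, relatedly, verifying that $K'\ast L$ is locally of bounded variation so that the Stieltjes measure $\dd(K'\ast L)$ is well defined; the remaining manipulations are essentially bookkeeping, but it is precisely the correct treatment of the atom of $L$ at the origin that produces the coefficient $K'(0)/K(0)$ and the delicate cancellation in the last step.
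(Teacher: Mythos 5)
Your proposal is correct and follows essentially the same route as the paper. The paper's argument proceeds by writing $K=K(0)+K'\ast 1$, rewriting the SVE as $X^{S_n}=x_0+K(0)Z^{(n)}+1\ast(K'\ast\dd Z^{(n)})$ via the associativity of the stochastic convolution, and then citing \cite[Lemma 2.6]{sergio} to convert the remaining term into the path-dependent drift involving $L$; your integration-by-parts identity $K\ast\dd Z^n=K(0)Z^n+K'\ast Z^n$ is exactly this decomposition (since $K'\ast Z^n=1\ast(K'\ast\dd Z^n)$), and your steps expressing $Z^n=L\ast(X^{S_n}-x_0)$, convolving with $K'$, and splitting off the atom $L(\{0\})=K(0)^{-1}$ to extract $(K'\ast L)(0)=K'(0)/K(0)$ are precisely the content of the cited lemma, carried out in place. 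The two pieces of bookkeeping you flag as obstacles (the stochastic Fubini step underlying $Z^n=L\ast(X^{S_n}-x_0)$, and the local bounded variation of $K'\ast L$) are indeed the technical inputs the paper draws from \cite[Lemma 2.1]{sergio} and \cite[Corollary 6.2, Chapter 3]{gp}, so your proof fills in explicitly exactly what the paper delegates to references.
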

\begin{proof}
Fix $n\in \mathbb{N}$. Consider the continuous semimartingale $Z^{(n)}$ starting at $0$ defined by
 $$Z^{(n)}_t=\int_{0}^{t}1_{\{s\le S_n\}}b(X^{S_n}_s)\dd s+ \int_{0}^{t}1_{\{s\le S_n\}}\sigma(X^{S_n}_s)\dd W_s,\quad t\ge 0.$$
Since $K\in C^1(\mathbb{R}_+)$, by the fundamental theorem of calculus $K=K(0)+K'\ast 1$. As a result, from \eqref{SDE_VolterraC1} we infer that, $\mathbb{P}-$a.s.,
\begin{multline*}
	X^{S_n}_t=x_0+\big(K\ast \dd Z^{(n)}\big)_t
	=
	x_0+K(0)Z^{(n)}_t+\big(\left(K'\ast 1\right)\ast \dd Z^{(n)}\big)_t
	\\=
	x_0+K(0)Z^{(n)}_t+\big( 1\ast \big(K'\ast\dd Z^{(n)}\big)\big)(t),\quad t\in[0,S_n],
\end{multline*}
where in the last step we also employ the associativity property of the stochastic convolution in \cite[Lemma 2.1]{sergio}. By Hypothesis \ref{Hyp1} (see also the subsequent comment) and \cite[Corollary 6.2, Chapter 3]{gp}, the map $t\mapsto (K'\ast L)(t)$ is right-continuous and of locally bounded variation. Therefore,  we can proceed as in the proof of \cite[Lemma 2.6]{sergio} to obtain from the previous equation, $\mathbb{P}-$a.s.,
\begin{multline*}
	X^{S_n}_t=x_0+K(0)Z^{(n)}_t+\frac{K'(0)}{K(0)}\left(1\ast X^{S_n}\right)(t) -x_0\left(1\ast \left(K'\ast L\right)\right)(t)\\+\left(1\ast \left(\dd(K'\ast L)\ast X^{S_n}\right)\right)(t),\quad t\in[0,S_n].
\end{multline*}
In differential form, this is precisely \eqref{SDE_lemma}, hence the proof is complete.
\end{proof}
We  define the modified drift  $\tilde{b}\colon I\to\mathbb{R}$ and the modified diffusion $\tilde{\sigma}\colon I\to\mathbb{R}$ by \[
	\tilde{b}(x)=K(0)b(x)+\frac{K'(0)}{K(0)}x,\qquad \tilde{\sigma}(x)=K(0)\sigma(x),\quad x\in I.
\]
In this way, for every $n\in\mathbb{N}$, Equation \eqref{SDE_lemma} reads
\begin{multline}\label{SDE_usare}
	\dd X^{S_n}_t=1_{\{t\le S_n\}}\left[\tilde{b}(X^{S_n}_t)\,\dd t+\tilde{\sigma}(X^{S_n}_t)\,\dd W_t\right]
	\\+1_{\{t\le S_n\}}\left[\left(\dd\left(K'\ast L\right)\ast \left(X^{S_n}-x_0\right)\right)(t)-x_0\frac{K'(0)}{K(0)}\right]\dd t
	,\quad
	X^{S_n}_0=x_0.
\end{multline}
Notice that the first two addends in the right-hand side of \eqref{SDE_usare} describe an It\^o diffusion, while the last drift term takes into account the path-dependent part of the dynamics, along with the deterministic  input curve $t\mapsto -x_0\frac{K'(0)}{K(0)}t$.
\begin{rem}\label{rem_CM}
	If $K\in C^1(\mathbb{R}_+)$ is a completely monotone kernel on $(0,\infty)$ not identically equal to $0$, then it satisfies Hypothesis \ref{Hyp1}. In particular, by \cite[Theorem 5.4, Chapter 5]{gp}, its resolvent of  the first kind $L$ admits the decomposition $L(\dd t)=K(0)^{-1}\delta_0(\dd t)+\rho(t)\,\dd t$, for some $\rho \in L^1_{\text{loc}}(\mathbb{R}_+)$ completely monotone on $(0,\infty)$.
	From the resolvent identity $K\ast L=1$ and \cite[Corollary 7.3, Chapter 3]{gp}, it then follows that $K'\ast L=-K(0)\rho$ in $(0,\infty)$. Taking the limit as $t\to 0+$, by \cite[Corollary 6.2, Chapter 3]{gp} we deduce that $K(0)\rho(0+)=-\frac{K'(0)}{K(0)}$. Thus, for every $n\in\mathbb{N},$ \eqref{SDE_lemma} can be rewritten as 
 		\begin{equation*}
 			\frac{\dd X^{S_n}_t}{K(0)}=1_{\{t\le S_n\}}
 			\left[\left(b(X^{S_n}_t)-\rho(0+)X^{S_n}_t\right)\dd t+\sigma(X^{S_n}_t)\,\dd W_t\right]+ 1_{\{t\le S_n\}}\left[\rho(0+)x_0-\left(\rho'\ast (X^{S_n}-x_0)\right)(t)\right]\dd t
 			,
 		\end{equation*}
 	with $X^{S_n}_0=x_0.$
\end{rem}	
By analogy with the classical case treated, for instance, in \cite[Section 5.5.C]{KS}, we impose the next assumptions of nondegeneracy and local integrability on the coefficients $\tilde{b}$ and $\tilde{\sigma}$ of \eqref{SDE_usare}.
\begin{hyp}\label{hyp_coeff}
	The map $\tilde{\sigma}\neq0$ almost everywhere in  $I$. Moreover, the functions ${\tilde{\sigma}^{-2}}$ and  $|\tilde{b}|{\tilde{\sigma}^{-2}}$ are locally integrable in $I$.
\end{hyp}
Under Hypothesis \ref{hyp_coeff}, for every $c\in I$ and $\beta,\,\gamma\in \mathbb{R}$, we  consider  the map $\tilde{b}_c(\cdot;\beta,\gamma)\colon I\to \mathbb{R}$ given by
\begin{equation}\label{def_bc}
	\tilde{b}_c(x;\beta,\gamma)=\tilde{b}(x)+\frac{K'(0)}{K(0)}\left(1_{\{x<c\}}\beta+1_{\{x\ge  c\}}\gamma\right),\quad x\in I.
\end{equation}
Such $\tilde{b}_c(\cdot;\beta,\gamma)$ can be interpreted as a shifted modified  drift, where the shift parameters are $\beta$ and $\gamma$. Clearly $\tilde{b}_c(\cdot;\beta,\gamma)$ satisfies Hypothesis \ref{hyp_coeff}, as well.
Notice that, when $x\in (l,c)$ [resp., $x\in [c,r)$], the value of $\tilde{b}_c(x;\beta,\gamma)$ depends only on $\beta$ [resp., $\gamma$]. In what follows, we will then write $\tilde{b}_c(x;\beta)$  [resp., $\tilde{b}_c(x;\gamma)$] for $x\in (l,c)$ [resp., $x\in [c,r)$] to simplify the notation.  We employ $\tilde{b}_c(\cdot;\beta,\gamma)$ and $\tilde{\sigma}$ to define the \emph{scale function} $p_c(\cdot;\beta,\gamma)\colon I\to \mathbb{R}$ by
 \begin{equation}\label{scale_func_def}
 	p_c(x;\beta,\gamma)=\int_{c}^{x}\exp\bigg\{-2\int_{c}^{y}\frac{\tilde{b}_c(z;\beta,\gamma)}{\tilde{\sigma}^2(z)}\dd z\bigg\}\dd y,\quad x\in I.
 \end{equation}
Also in this case, we only write $p_c(x;\beta)$ [resp. $p_c(x;\gamma)$] when $x\in (l,c)$ [resp.,  $x\in [c,r)$]. Observe that $p_c(\cdot; \beta, \gamma)$ is null at $x=c$ and (strictly) increasing in $I$. As a result, the limits $p_c(l+;\beta)$ and $p_c(r-;\gamma)$ always exist, finite or infinite. The mapping $p_c(\cdot;\beta,\gamma)$ belongs to $ C^1(I)$, and  a direct computation shows that 
\begin{equation}\label{eq_p}
	p_c''(x;\beta,\gamma)=-2\frac{\tilde{b}_c(x;\beta,\gamma)}{\tilde{\sigma}^2(x)}p_c'(x;\beta,\gamma),\quad \text{for a.e. }x\in I.
\end{equation}
As in the classical path-independent case, we use the scale function $p_c(\cdot;\beta,\gamma)$ to  define, by recursion, the sequence $(u_{c,n}(\cdot;\beta,\gamma))_{n\in\mathbb{N}_0}$ of $\mathbb{R}_+-$valued functions on $I$ where $u_{c,0}(\cdot;\beta,\gamma)\equiv1$ and 
\begin{equation}\label{v_def}
	u_{c,n}(x;\beta,\gamma)=2\int_{c}^{x}p'_c(y;\beta,\gamma)\left(\int_{c}^{y}\frac{u_{c,n-1}(z;\beta,\gamma)}{p'_c(z;\beta,\gamma)\tilde{\sigma}^2(z)}\dd z\right)\dd y
	,\quad x\in I,\,n\in\mathbb{N}.
\end{equation}
Of particular importance is the map
\begin{equation}\label{v_def2}
	v_c(x;\beta,\gamma)=u_{c,1}(x;\beta,\gamma),\quad x\in I.
\end{equation}
Indeed, for suitable values of $\beta$ and $\gamma$,   $v_c(\cdot;\beta,\gamma)$ will be employed to establish  necessary and sufficient conditions for $\mathbb{P}(S=\infty)=1$, see Theorems \ref{Feller_thm_nec}-\ref{Feller_thm} in Subsection \ref{subsec_v}. Notice that $v_c(\cdot;\beta,\gamma)$ is  decreasing in $(l,c)$, increasing in $(c,r)$ and equals to $0$ at $x=c$. As usual, we will write $v_c(x;\beta),\,u_{c,n}(x,\beta)$ [resp., $v_c(x;\gamma),\,u_{c,n}(x;\gamma)$] when $x\in (l,c)$ [resp., $x\in [c,r)$].\vspace{2mm}\\
We now present two technical results which analyze the dependence of the maps $p_c(\cdot;\beta,\gamma)$ and $v_c(\cdot;\beta,\gamma)$ on the parameter $c\in I$. Their proofs, which essentially rely on the definitions in \eqref{scale_func_def} and \eqref{v_def2}, are postponed to Appendix \ref{Appendix_tech}. 
\begin{lemma}\label{lemma_diff_c}
	Let $\beta,\,\gamma\in \mathbb{R}$ and consider $c_1,c_2\in I$ such that $c_1<c_2$.  Then, for every $x\in (l,c_1)$,
	\begin{equation}\label{p_dep_c}
		p_{c_2}(x;\beta)=p_{c_2}(c_1;\beta)+p'_{c_2}(c_1;\beta)p_{c_1}(x;\beta);\qquad 
		v_{c_2}(x;\beta)=v_{c_2}(c_1;\beta)	+
		p_{c_1}(x;\beta)
		v'_{c_2}(c_1;\beta)
		+
		v_{c_1}(x;\beta),
	\end{equation}
	and, for every $x\in (c_2,r)$,
	\begin{equation}\label{v_dep_c}
		p_{c_1}(x;\gamma)=p_{c_1}(c_2;\gamma)+p'_{c_1}(c_2;\gamma)p_{c_2}(x;\gamma);\qquad  v_{c_1}(x;\gamma)=v_{c_1}(c_2;\gamma)	+
		p_{c_2}(x;\gamma)
		v'_{c_1}(c_2;\gamma)
		+
		v_{c_2}(x;\gamma).
	\end{equation}
\end{lemma}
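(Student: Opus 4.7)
The strategy is to exploit the observation that, on a region where the indicator activation pattern in the definition of $\tilde b_c(\cdot;\beta,\gamma)$ is the same for $c=c_1$ and $c=c_2$, the scale functions $p_{c_1}$ and $p_{c_2}$ differ only through their integration base point, yielding a multiplicative factorization. Explicitly, for $z\in(l,c_1)$ one has $z<c_1<c_2$, so both indicators $1_{\{z<c_1\}}$ and $1_{\{z<c_2\}}$ are $1$; hence $\tilde b_{c_1}(z;\beta,\gamma)=\tilde b_{c_2}(z;\beta,\gamma)=\tilde b(z)+\frac{K'(0)}{K(0)}\beta$. Splitting the exponential in \eqref{scale_func_def} at $c_1$ and using this equality then gives the key identity $p'_{c_2}(z;\beta)=p'_{c_2}(c_1;\beta)\,p'_{c_1}(z;\beta)$ for all $z\in(l,c_1)$.

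From here, the first identity in \eqref{p_dep_c} follows immediately by writing
$p_{c_2}(x;\beta)=p_{c_2}(c_1;\beta)+\int_{c_1}^{x}p'_{c_2}(y;\beta)\,dy$
(being careful that $x<c_1$ makes both integrals negative) and substituting the factorization. For the second identity, introduce the auxiliary $F(y)=\int_{c_2}^{y}\bigl(p'_{c_2}(z;\beta)\tilde\sigma^2(z)\bigr)^{-1}dz$, so that $v_{c_2}(x;\beta)=2\int_{c_2}^{x}p'_{c_2}(y;\beta)F(y)\,dy$. Splitting at $c_1$ gives $v_{c_2}(x;\beta)-v_{c_2}(c_1;\beta)=2\int_{c_1}^{x}p'_{c_2}(y;\beta)F(y)\,dy$, and I further decompose $F(y)=F(c_1)+\int_{c_1}^{y}(p'_{c_2}\tilde\sigma^2)^{-1}dz$. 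Differentiating \eqref{v_def2} yields $v'_{c_2}(c_1;\beta)=2p'_{c_2}(c_1;\beta)F(c_1)$, so the $F(c_1)$ contribution equals $2F(c_1)\bigl(p_{c_2}(x;\beta)-p_{c_2}(c_1;\beta)\bigr)=v'_{c_2}(c_1;\beta)\,p_{c_1}(x;\beta)$ after invoking the first identity already proved. The remaining piece equals $v_{c_1}(x;\beta)$: applying the factorization to both $p'_{c_2}(y;\beta)$ in the outer integrand and $p'_{c_2}(z;\beta)$ in the inner denominator, the factor $p'_{c_2}(c_1;\beta)$ cancels and one recovers precisely the definition \eqref{v_def2} of $v_{c_1}(x;\beta)$.

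The identities in \eqref{v_dep_c} are then obtained by the mirror argument: for $z\in(c_2,r)$, both indicators $1_{\{z\ge c_1\}}$ and $1_{\{z\ge c_2\}}$ equal $1$, so $\tilde b_{c_1}(z;\beta,\gamma)=\tilde b_{c_2}(z;\beta,\gamma)=\tilde b(z)+\frac{K'(0)}{K(0)}\gamma$. This produces the analogous factorization $p'_{c_1}(z;\gamma)=p'_{c_1}(c_2;\gamma)\,p'_{c_2}(z;\gamma)$ for $z\in(c_2,r)$, after which one repeats Steps analogous to those above but with the integrals split at $c_2$ and with all signs now positive (since $x>c_2$).

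The argument is essentially an exercise in bookkeeping; I do not expect a genuine obstacle, but the main point of care is tracking signs (the scale function is signed, vanishing at the base point and taking opposite signs on either side) and recognizing at each step that the $p'_c(c';\cdot)$ factor arising from the split acts as the constant of proportionality between the two scale densities. Once the factorization in Step 2 is in hand, each identity reduces to a one-line substitution together with the elementary derivative identity $v'_c(x;\beta,\gamma)=2p'_c(x;\beta,\gamma)\int_c^{x}(p'_c\tilde\sigma^2)^{-1}dz$ obtained from \eqref{v_def2}.
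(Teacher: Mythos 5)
Your proof is correct and follows essentially the same route as the paper: both split the defining integrals at the intermediate point ($c_1$ or $c_2$), use the fact that $\tilde b_{c_1}$ and $\tilde b_{c_2}$ coincide on the relevant region so that the exponentials factor multiplicatively, and then identify the resulting pieces with $p_{c_i}$, $v_{c_i}$ and $v'_{c_i}$. The only cosmetic difference is that you isolate the key factorization $p'_{c_2}(z;\beta)=p'_{c_2}(c_1;\beta)\,p'_{c_1}(z;\beta)$ (resp.\ its mirror) as a stated identity before substituting, and you work out \eqref{p_dep_c} in detail, whereas the paper performs the split inline and exhibits the second equation of \eqref{v_dep_c} as its worked example.
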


\begin{corollary}\label{fineteness_c}
	Let $\beta,\,\gamma\in \mathbb{R}$. Then the finiteness of $v_c(l+;\beta)$ or $v_c(r-;\gamma)$ does not depend on the choice of $c\in I$, namely, for every $c_1,\,c_2\in I$, 
	\[
\text{	$v_{c_1}(l+;\beta)=\infty$ [resp., $v_{c_1}(r-;\gamma)=\infty$] \,\,if and only if \,\,$v_{c_2}(l+;\beta)=\infty$ [resp., $v_{c_2}(r-;\gamma)=\infty$]}.
	\]
\end{corollary}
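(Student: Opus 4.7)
By symmetry (the $r-$ statement is handled with \eqref{v_dep_c} in place of \eqref{p_dep_c}), I focus on the claim for $v_c(l+;\beta)$ and, without loss of generality, take $c_1<c_2$. The second identity of \eqref{p_dep_c} from Lemma~\ref{lemma_diff_c} rewrites as
\[
v_{c_2}(x;\beta)
=v_{c_2}(c_1;\beta)+\bigl(-p_{c_1}(x;\beta)\bigr)\bigl(-v'_{c_2}(c_1;\beta)\bigr)+v_{c_1}(x;\beta),\qquad x\in (l,c_1),
\]
in which all three summands on the right are nonnegative: $v_{c_2}(c_1;\beta)\in[0,\infty)$; $-p_{c_1}(x;\beta)>0$ because $p_{c_1}(\cdot;\beta)$ is strictly increasing with $p_{c_1}(c_1;\beta)=0$; and $-v'_{c_2}(c_1;\beta)>0$ since $v_{c_2}(\cdot;\beta)$ is strictly decreasing on $(l,c_2)$. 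Crucially, $v_{c_2}(c_1;\beta)$ and $v'_{c_2}(c_1;\beta)$ are finite interior-point values.

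Since each term is monotone in $x$ on $(l,c_1)$, I may pass to the limit $x\to l+$ to obtain the equality
\[
v_{c_2}(l+;\beta)=v_{c_2}(c_1;\beta)+\bigl(-p_{c_1}(l+;\beta)\bigr)\bigl(-v'_{c_2}(c_1;\beta)\bigr)+v_{c_1}(l+;\beta)
\]
in $[0,\infty]$. When $p_{c_1}(l+;\beta)>-\infty$, the middle addend is finite, so this equality immediately gives the equivalence $v_{c_1}(l+;\beta)=\infty\Leftrightarrow v_{c_2}(l+;\beta)=\infty$.

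The nontrivial case is $p_{c_1}(l+;\beta)=-\infty$, in which the first identity of \eqref{p_dep_c} (with $p'_{c_2}(c_1;\beta)>0$) forces $p_{c_2}(l+;\beta)=-\infty$ as well. To handle this, I would establish the auxiliary implication
\[
p_c(l+;\beta)=-\infty\ \Longrightarrow\ v_c(l+;\beta)=\infty,\qquad c\in I.
\]
Starting from definition~\eqref{v_def}--\eqref{v_def2} and applying Fubini's theorem (all integrands being positive), one obtains the representation
\[
v_c(x;\beta)=2\int_x^c\frac{p_c(z;\beta)-p_c(x;\beta)}{p_c'(z;\beta)\,\tilde\sigma^2(z)}\,\dd z,\qquad x\in(l,c),
\]
and then one fixes an intermediate point $\bar c\in(l,c)$ and restricts to $x\in(l,\bar c)$ to get
\[
v_c(x;\beta)\ge 2\bigl(p_c(\bar c;\beta)-p_c(x;\beta)\bigr)\int_{\bar c}^c \frac{\dd z}{p_c'(z;\beta)\,\tilde\sigma^2(z)}.
\]
Hypothesis~\ref{hyp_coeff} (local integrability of $\tilde\sigma^{-2}$) together with the continuity and strict positivity of $p_c'$ on $[\bar c,c]$ make the integral a positive finite constant; letting $x\to l+$ drives the lower bound to $+\infty$. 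Applied to both $c_1$ and $c_2$, this yields $v_{c_1}(l+;\beta)=v_{c_2}(l+;\beta)=\infty$, completing the case.

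The main obstacle is precisely this Fubini-based lower bound: the three-term decomposition alone cannot exclude the pathology $v_{c_2}(l+;\beta)=\infty$ while $v_{c_1}(l+;\beta)<\infty$, which is exactly what the auxiliary implication rules out. The $r-$ case with $\gamma$ runs identically through \eqref{v_dep_c}, with $p_{c_2}(x;\gamma)\ge 0$ and $v'_{c_1}(c_2;\gamma)>0$ now playing the roles of $-p_{c_1}(x;\beta)$ and $-v'_{c_2}(c_1;\beta)$.
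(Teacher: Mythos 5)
Your proof is correct and follows essentially the same route as the paper: the easy direction comes from the sign structure of the second identity in Lemma~\ref{lemma_diff_c}, and the troublesome case $p_{c_1}(l+;\beta)=-\infty$ is resolved by the auxiliary implication $p_c(l+;\beta)=-\infty\Rightarrow v_c(l+;\beta)=\infty$, obtained from the very same integral lower bound (the paper reads it off the double-integral definition of $v_c$; you reach it through the equivalent Fubini representation). The only cosmetic difference is that you pass the full three-term identity to a limit in $[0,\infty]$ and invoke the auxiliary implication for both $c_1$ and $c_2$, while the paper argues one implication at a time and needs the bound only for $c_2$.
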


We conclude this preliminary part by formulating an additional requirement on the kernel $K$ and its resolvent of the first kind $L$. 
\begin{hyp}\label{hyp_increasing}
	The map $t\mapsto (K'\ast L)(t)$ is nonpositive and nondecreasing in $\mathbb{R}_+$
\end{hyp}
Under Hypothesis \ref{hyp_increasing}, the Lebesgue-Stieltjes measure $\dd (K'\ast L)(t)$ is nonnegative. This fact, along with the assumption $K'\ast L\le 0$, will be an essential tool for the arguments of Subsections \ref{suff_scalep}-\ref{subsec_v}, as it will allow to handle the path-dependent term in the dynamics \eqref{SDE_usare}. 
\begin{rem}
	Thanks to the computations in Remark \ref{rem_CM}, a kernel $K\in C^1(\mathbb{R}_+)$ which is completely monotone on $(0,\infty)$ satisfies Hypothesis \ref{hyp_increasing}.\\
	More generally, Hypothesis \ref{hyp_increasing} is met when $K'\le 0$  and $t\mapsto L([0,t])$ is concave in $\mathbb{R}_+$. Indeed, $K'\le 0$ together with the fact that $L$ is a nonnegative measure implies $K'\ast L\le 0$. The concavity of $L([0,\cdot])$ ensures that such a function is differentiable a.e. in $\mathbb{R}_+$, with a nonincreasing derivative. Differentiating the resolvent identity we obtain
	\[
	\left(K'\ast L\right)(t)=-K(0)\frac{\dd}{\dd t}L([0,t]),\quad \text{a.e. }t\ge 0,
	\]
	which shows that  the right-continuous map $K'\ast L$ (cfr. \cite[Corollary 6.2, Chapter 3]{gp}) is nondecreasing in $\mathbb{R}_+$. 
\end{rem}
\subsection{Sufficient conditions via \texorpdfstring{$p_c(\cdot;\beta,\gamma)$}{pc(·;β,γ)}}\label{suff_scalep}
In this subsection, we show that the values $p_c(l+;\beta)$ and $p_c(r-;\gamma)$ can be employed -- for a suitable choice of  $\beta,\gamma\in \mathbb{R}$ --  to describe the $\inf$ or the  $\sup$ of the solution process $X$ of \eqref{SDE_VolterraC1} in the random interval $[0,S)$. The results that we obtain are shown in Proposition \ref{suff_thm} and can be applied to  an half-bounded interval $I=(l,r)$, where $l>-\infty $ or $r<\infty$.
\begin{proposition}\label{suff_thm} Let $c\in I=(l,r)$ and $p_c(\cdot;\beta,\gamma),\,\beta,\gamma\in \mathbb{R},$ be as in \eqref{scale_func_def}. Assume Hypotheses \ref{Hyp1}-\ref{hyp_coeff}-\ref{hyp_increasing}.
\begin{enumerate}[label=(\roman*)]
	\item \label{p1_suff}Suppose that $-\infty\le l<r< \infty$. If $p_c(l+;-r)>-\infty$ and $p_c(r-;-r)=\infty$, then $$\mathbb{P}\left(\sup_{0\le t< S} X_t<r\right)=1;$$
	\item \label{dua_suff}
	Suppose that $-\infty< l<r\le \infty$. If $p_c(l+;-l)=-\infty$ and $p_c(r-;-l)<\infty$, then $$\mathbb{P}\left(\inf_{0\le t< S} X_t>l\right)=1.$$
\end{enumerate}
\end{proposition}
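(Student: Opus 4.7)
The plan for part \ref{p1_suff} is to show that $p_c(X^{S_n}_t;-r,-r)$ is a supermartingale for each $n$ and then invoke a standard Feller-type barrier estimate to rule out $X$ approaching $r$; part \ref{dua_suff} will follow by the dual argument with shift parameters $\beta=\gamma=-l$ and using the bound $X^{S_n}_s\ge l$. To this end, I would apply It\^o's formula to $p_c(\cdot;-r,-r)$ composed with the semimartingale $X^{S_n}$ from Lemma \ref{le_semi}. The scale-function ODE \eqref{eq_p} with $\beta=\gamma=-r$ forces the quadratic-variation term $\tfrac12 p_c''(X)\tilde\sigma^2(X)$ to cancel against $p_c'(X)\tilde b_c(X;-r,-r)$, leaving a martingale increment plus the drift
\[
1_{\{s\le S_n\}}\,p_c'(X^{S_n}_s)\left[\frac{K'(0)}{K(0)}\,r + (d(K'\ast L)\ast X^{S_n})(s) - x_0 (K'\ast L)(s)\right]ds.
\]

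Since $p_c'>0$, the goal reduces to showing the bracket is nonpositive. Using that $d(K'\ast L)$ is a nonnegative measure (Hypothesis \ref{hyp_increasing}) and $X^{S_n}\le r$, the convolution is bounded by $r\int_{[0,s]}d(K'\ast L)=r\bigl[(K'\ast L)(s)-K'(0)/K(0)\bigr]$, where the value of the mass follows from $(K'\ast L)(0)=K'(0)L(\{0\})=K'(0)/K(0)$ together with the convention $d(K'\ast L)(\{0\})=0$. Substituting, the $K'(0)r/K(0)$ terms cancel and what remains is dominated by $(r-x_0)(K'\ast L)(s)\le 0$, since $r>x_0$ and $K'\ast L\le 0$. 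Boundedness of $p_c'$ and $\tilde\sigma$ on $\widebar{I}_n$ then upgrades the local supermartingale to a genuine one.

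For the final step I define $T_\rho=\inf\{t\ge 0:X_t\ge \rho\}$ with $\rho\in(x_0,r)$ and apply optional sampling at $T_\rho\wedge S_n\wedge t$. Using the lower bound $p_c(\cdot;-r,-r)\ge p_c(l+;-r,-r)>-\infty$ on $I$ and that $X^{S_n}_{T_\rho}=\rho$ on $\{T_\rho\le S_n\wedge t\}$, one obtains
\[
\mathbb{P}(T_\rho\le S_n\wedge t)\le \frac{p_c(x_0;-r,-r)-p_c(l+;-r,-r)}{p_c(\rho;-r,-r)-p_c(l+;-r,-r)}.
\]
Sending $n,t\to\infty$ and then $\rho\uparrow r$ drives this bound to zero, as $p_c(r-;-r)=\infty$. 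Continuity of $X$ together with $S_n\uparrow S$ identifies $\{\sup_{0\le t<S}X_t\ge\rho\}$ with $\{T_\rho\le S\}$ almost surely, yielding $\mathbb{P}(\sup_{0\le t<S}X_t<r)=1$.

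I expect the main obstacle to be the drift-sign calculation in the second paragraph: only the specific shift $\beta=\gamma=-r$, matched to the upper boundary, produces the precise cancellation of the $K'(0)r/K(0)$ term against the initial mass of $K'\ast L$, turning the path-dependent term into a signed expression; any other choice would leave a residue of uncontrolled sign, which is precisely where the classical path-independent proof (with $\beta=\gamma=0$) would fail in the Volterra setting.
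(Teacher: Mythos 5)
Your argument is correct and matches the paper's proof in all essentials: both apply the generalized It\^o formula to $p_c(\cdot;-r,-r)$, use the scale-function ODE to cancel the diffusion-drift piece, exploit Hypothesis \ref{hyp_increasing} together with $X\le r$ and the shift $\beta=\gamma=-r$ to reduce the path-dependent drift to the nonpositive quantity $(r-x_0)(K'\ast L)$, and then extract a barrier bound that is driven to zero because $p_c(r-;-r)=\infty$ while $p_c(l+;-r)>-\infty$. The only cosmetic differences are that the paper stops $X$ at a two-sided barrier time $\tau_\lambda^\rho$ and sends $\lambda\downarrow l$ while you stop at $S_n\wedge T_\rho\wedge t$ and send $n,t\to\infty$, and that the paper takes expectations directly rather than invoking optional sampling; the concluding identification should more precisely read $\{\sup_{0\le t<S}X_t=r\}\subset\bigcap_{\rho<r}\{T_\rho<S\}$, but this is a wording issue and not a gap.
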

\begin{proof}
We only prove Point \ref{p1_suff}, as Point \ref{dua_suff} can be obtained by duality arguments. Suppose then that $-\infty\le l<r< \infty$, fix $c\in (l,r)$ and define, for every $\lambda,\,\rho\in I$ such that $\lambda<x_0<\rho$ and  $\lambda<c<\rho$, the stopping times 
\[
	\tau_\lambda=\inf\left\{t\ge0 : X_t\le \lambda\right\},\qquad 
		\tau_\rho=\inf\left\{t\ge0 : X_t\ge \rho\right\}.
\]
Letting $\tau_\lambda^\rho=\tau_\lambda\wedge \tau_\rho,$ from \eqref{SDE_usare}-\eqref{def_bc} we infer that 
	\begin{multline*}
		\dd X^{\tau_\lambda^\rho}_t=1_{\{t\le \tau_\lambda^\rho\}}\left[\tilde{b}_c(X_t;-r,-r)\,\dd t+\tilde{\sigma}(X^{}_t)\,\dd W_t\right]
		\\+1_{\{t\le \tau_\lambda^\rho\}}\left[\left(\dd\left(K'\ast L\right)\ast (X-r)^{}\right)(t)+(r-x_0)(K'\ast L)(t)\right]\dd t,
	\end{multline*}
with $X^{\tau_\lambda^\rho}_0=x_0.$
Then, applying a generalized It\^o's formula to the process $(p_c(X_{t\wedge \tau^\rho_\lambda};-r,-r))_{t\ge 0}$, see, e.g.,  \cite[Theorem 7.1 and Problem 7.3, Chapter 3]{KS}, by \eqref{eq_p},
\begin{multline*}
		\dd p_c\left(X^{\tau_\lambda^\rho}_t;-r,-r\right)=1_{\{t\le \tau_\lambda^\rho\}}p'_c\left(X_t;-r,-r\right)\Big[\tilde{\sigma}(X^{}_t)\,\dd W_t\\
		+
	\left(	\left(\dd\left(K'\ast L\right)\ast (X-r)^{}\right)(t)+(r-x_0)(K'\ast L)(t)\right)
	\dd t\Big], 
\end{multline*}
with $p_c\big(X^{\tau_\lambda^\rho}_0;-r,-r\big)=p_c(x_0;-r,-r)$. In the sequel, we are going to write $p_c(\cdot)$ instead of $p_c(\cdot;-r,-r)$ to shorten the notation. The right-hand side of the previous equation is the sum of a martingale (as $p'_c$ and $\tilde{\sigma}$ are locally bounded in $I$) and, thanks to Hypothesis \ref{hyp_increasing} 
 and recalling that $p_c'>0$ in $I$, of a nonpositive absolutely continuous process. As a result, taking expectations, we deduce that  
\[
	\mathbb{E}\left[p_c\left(X^{\tau_\lambda^\rho}_t\right)\right]\le p_c(x_0),\quad t\ge 0,
\]
whence, also using the fact that  $p_c$ is (strictly) increasing in $I$, 
 \[
		\mathbb{E}\left[p_c\left(X^{\tau_\lambda^\rho}_t\right)1_{\{\tau_\lambda^\rho<\infty\}}\right]\le p_c(x_0)-
	\mathbb{E}\left[p_c\left(X_t\right)1_{\{\tau_\lambda^\rho=\infty\}}\right]\le 
	p_c(x_0)-p_c(l+),\quad t\ge 0.
\]
Letting $t\to \infty$, the dominated convergence theorem and the continuity of $X$ in $\mathbb{R}_+$ yield 
\begin{align*}
p_c(\lambda)\mathbb{P}\left(\tau_\lambda^\rho=\tau_\lambda<\infty\right)
	+
	p_c(\rho)\mathbb{P}\left(\tau_\lambda^\rho=\tau_\rho<\infty\right)
	\le p_c(x_0)-p_c(l+).
\end{align*}
In particular, $\mathbb{P}\left(\tau_\lambda^\rho=\tau_\rho<\infty\right)\le p_c(\rho)^{-1}(p_c(x_0)-2p_c(l+))$. Thus, 
\[
\mathbb{P}\left(X_t=\rho,\,\text{for some }t\in[0,S)\right)= \lim_{\lambda\downarrow l}\mathbb{P}\left(\tau_\lambda^\rho=\tau_\rho<\infty\right)\le \frac{p_c(x_0)-2p_c(l+)}{p_c(\rho)},
\]
from which we conclude that $$\mathbb{P}\left(\sup_{0\le t< S} X_t=r\right)\le \lim_{\rho\uparrow r}\mathbb{P}\left(X_t=\rho,\,\text{for some }t\in[0,S)\right)= 0.$$
 This is equivalent to $\mathbb{P}\left(\sup_{0\le t< S} X_t<r\right)=1$,  which is the asserted equality.
\end{proof}

\subsection{Necessary and sufficient conditions via   \texorpdfstring{$v_c(\cdot;\beta,\gamma)$}{vc(·;β,γ)}}	\label{subsec_v}
In this subsection, more precisely in Theorems \ref{Feller_thm_nec}-\ref{Feller_thm}, we extend the classical Feller's test for explosions in \cite{WF} (see also \cite[Theorem 5.29, Chapter 5]{KS}) to the path-dependent case. To do this, we employ the function $v_c(\cdot;\beta,\gamma)$ defined in \eqref{v_def}-\eqref{v_def2}. 
\vspace{2mm}\\
We start by presenting the following preparatory lemma, which  compares the values of such $v_c(\cdot;\beta,\gamma)$ as $\beta$ and $\gamma$ vary in $\mathbb{R}$. Its proof is deferred to  Appendix \ref{Appendix_tech}.
\begin{lemma}\label{le_dip_betagamma}
	Fix $c\in I$ and assume that $K'(0)\le 0$. Consider $\beta_1,\,\beta_2\in \mathbb{R}$ and $\gamma_1,\,\gamma_2\in \mathbb{R}$ such that $\beta_1\ge \beta_2$ and $\gamma_1\le \gamma_2$. Then 
	$v_c(\cdot; \beta_1,\gamma_1)\le v_c(\cdot; \beta_2,\gamma_2)$ in $I$.\vspace{2mm}
	
	Furthermore, if $\int_{l}^{r}{\tilde{\sigma}^{-2}(z)}\,\dd z<\infty$, then $v_c(\cdot; \beta_2,\gamma_2)\le Cv_c(\cdot; \beta_1,\gamma_1)$ in $I$ for some constant $C>0$.
\end{lemma}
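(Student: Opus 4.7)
The starting observation is that, since $K'(0)\le 0$ and $K(0)>0$, formula \eqref{def_bc} shows that $\tilde{b}_c(\cdot;\beta,\gamma)$ is nonincreasing in $\beta$ on $(l,c)$ and nonincreasing in $\gamma$ on $[c,r)$. The hypotheses $\beta_1\ge\beta_2$ and $\gamma_1\le\gamma_2$ therefore give
\[
\tilde{b}_c(u;\beta_1)\le \tilde{b}_c(u;\beta_2)\text{ for }u\in(l,c),\qquad \tilde{b}_c(u;\gamma_1)\ge \tilde{b}_c(u;\gamma_2)\text{ for }u\in[c,r).
\]
The proof naturally splits at $c$ (where both sides of each asserted inequality vanish), and I would treat each side by the same scheme.

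Next, I would rewrite $v_c$, via Fubini, as a positive double integral in which the $(\beta,\gamma)$-dependence is isolated in the ratio $p'_c(y;\cdot)/p'_c(z;\cdot)$. For $x\in[c,r)$,
\[
v_c(x;\gamma)=2\int_c^x\!\int_c^y\frac{1}{\tilde{\sigma}^2(z)}\,\frac{p'_c(y;\gamma)}{p'_c(z;\gamma)}\,\dd z\,\dd y,\qquad \frac{p'_c(y;\gamma)}{p'_c(z;\gamma)}=\exp\!\Bigl\{-2\!\int_z^y\!\frac{\tilde{b}_c(u;\gamma)}{\tilde{\sigma}^2(u)}\,\dd u\Bigr\},
\]
while for $x\in(l,c]$, after inverting the orientation of the two backward integrals in \eqref{v_def}--\eqref{v_def2},
\[
v_c(x;\beta)=2\int_x^c\!\int_y^c\frac{1}{\tilde{\sigma}^2(z)}\,\frac{p'_c(y;\beta)}{p'_c(z;\beta)}\,\dd z\,\dd y,\qquad \frac{p'_c(y;\beta)}{p'_c(z;\beta)}=\exp\!\Bigl\{2\!\int_y^z\!\frac{\tilde{b}_c(u;\beta)}{\tilde{\sigma}^2(u)}\,\dd u\Bigr\}.
\]
The opposite signs in the two exponents compensate the opposite directions of the $\tilde{b}_c$-inequalities: in both regimes one obtains $p'_c(y;\beta_1,\gamma_1)/p'_c(z;\beta_1,\gamma_1)\le p'_c(y;\beta_2,\gamma_2)/p'_c(z;\beta_2,\gamma_2)$ pointwise on the respective integration domains, and plugging these inequalities into the double integrals yields the first claim $v_c(\cdot;\beta_1,\gamma_1)\le v_c(\cdot;\beta_2,\gamma_2)$.

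For the reverse comparison under the assumption $\int_{l}^{r}\tilde{\sigma}^{-2}(u)\,\dd u<\infty$, I would factor the ratio with subscript $2$ as the ratio with subscript $1$ times the error factors
\[
\exp\!\Bigl\{-2\tfrac{K'(0)}{K(0)}(\gamma_2-\gamma_1)\!\int_z^y\!\tilde{\sigma}^{-2}(u)\,\dd u\Bigr\}\quad\text{and}\quad \exp\!\Bigl\{2\tfrac{K'(0)}{K(0)}(\beta_2-\beta_1)\!\int_y^z\!\tilde{\sigma}^{-2}(u)\,\dd u\Bigr\}.
\]
Repeating the previous sign analysis shows that both exponents are nonnegative, and the finiteness of $\int_l^r\tilde{\sigma}^{-2}$ bounds the two factors uniformly in $(y,z)$ by the constants obtained by replacing the inner range of integration with $(l,r)$. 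Taking $C$ equal to the larger of these two constants and reinserting in the double integral representation of $v_c$ yields $v_c(\cdot;\beta_2,\gamma_2)\le C\,v_c(\cdot;\beta_1,\gamma_1)$. The main obstacle is the careful bookkeeping of orientation for $x<c$: the two sign flips coming from $\int_c^x$ and $\int_c^y$ compensate each other to produce a manifestly positive double integral over $\{x\le y\le z\le c\}$, and this flip is precisely what realigns the a priori opposite monotonicities of $\tilde{b}_c(\cdot;\beta)$ and $\tilde{b}_c(\cdot;\gamma)$ into a single comparison for the ratio $p'_c(y;\cdot)/p'_c(z;\cdot)$ on both sides of $c$.
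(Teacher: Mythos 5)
Your proposal is correct and follows essentially the same strategy as the paper's proof: both reduce the comparison of $v_c$ to a pointwise comparison of the ratio $p'_c(y;\cdot)/p'_c(z;\cdot)$, established by exploiting $K'(0)\le 0$ and the monotonicity of $\tilde{b}_c$ in $\beta$ and $\gamma$, and both obtain the reverse bound by factoring out a uniformly bounded exponential error term via $\int_l^r\tilde{\sigma}^{-2}<\infty$. The only difference is presentational: you spell out the Fubini/orientation bookkeeping for $x<c$ more explicitly, while the paper states the key ratio inequality directly for both regimes.
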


Consider $c\in I$ and $\beta,\,\gamma\in \mathbb{R}$. Recalling \eqref{v_def}, since $\tilde{b}_c(\cdot; \beta,\gamma)$ and $\tilde{\sigma}$ satisfy Hypothesis \ref{hyp_coeff}, by \cite[Lemma 5.26, Chapter 5]{KS}, the series 
\begin{equation}\label{definition_series}
	u_c(x;\beta,\gamma)=\sum_{n=0}^{\infty}u_{c,n}(x;\beta,\gamma)
\end{equation}  converges locally uniformly on $I$ to a differentiable function with locally absolutely continuous derivative. Such map $u_c(\cdot;\beta,\gamma)$ is decreasing in $(l,c)$, increasing in $(c,r)$ and solves the second order ODE
\begin{equation}\label{ODE_u}
\begin{cases}
u_c(x;\beta,\gamma)=\frac{1}{2}\tilde{\sigma}(x)u_c''(x;\beta,\gamma)+\tilde{b}_c(x;\beta,\gamma)u_c'(x;\beta,\gamma),\quad \text{for a.e. }x\in I,	\\
u_c(c;\beta,\gamma)=1,\,u_c'(c;\beta,\gamma)=0.
\end{cases}
\end{equation}
In addition, the following bounds hold (cfr. \cite[Equation (5.69), Chapter 5]{KS}):
\begin{equation}\label{control_u_KS}
	1+v_c(x;\beta,\gamma)\le u_c(x;\beta,\gamma)\le e^{v_c(x;\beta,\gamma)},\quad x\in I.
\end{equation}
As before, we will write $u_c(x;\beta)$ [resp., $u_c(x;\gamma)$] when $x\in (l,c)$ [resp., $x\in [c,r)$].
We remark that the above construction of the map $u_c(\cdot;\beta,\gamma)$ coincides with the one in the path-independent case, once we introduce, according to Hypothesis \ref{hyp_coeff}, the shifted modified drift $\tilde{b}_c(\cdot; \beta,\gamma)$, see \eqref{def_bc}, and the modified diffusion $\tilde{\sigma}.$ 
\vspace{2mm}

The next theorem gives a necessary condition for the solution process $X$ of \eqref{SDE_VolterraC1} not to exit from the interval $I$, i.e., for $\mathbb{P}(S=\infty)=1$. We emphasize that here, contrary to Proposition \ref{suff_thm}, we do not impose any restrictions on $I$, which can then be a generic real interval.
\begin{theorem}\label{Feller_thm_nec}
	Fix  $c\in I$ and assume Hypotheses \ref{Hyp1}-\ref{hyp_coeff}-\ref{hyp_increasing}. 
	Consider the stopping times $S_-=\inf\{t\ge 0: X_t=l\}$ and  $S_+=\inf\{t\ge 0: X_t=r\}$, so that $S=\min\{S_-,S_+\}$. 
\begin{enumerate}[label=(\roman*)]
	\item\label{nec_lower} If $\mathbb{P}(S_-=\infty)=1$, then  $v_c\big(l+;-{\beta}\big)=\infty$, for every $\beta>x_0$;
	\item \label{nec_upper} if $\mathbb{P}(S_+=\infty)=1$,  then $v_c\big(r-;-{\gamma}\big)=\infty$, for every $\gamma<x_0$.
\end{enumerate} 
Consequently,  
\[
	\text{$v_c\big(l+;-{\beta}\big)=\infty$, for every $\beta>x_0$, \quad and\quad  $v_c\big(r-;-{\gamma}\big)=\infty$, for every $\gamma<x_0$,}
\] is  a  necessary condition for $\mathbb{P}(S=\infty)=1$.
\end{theorem}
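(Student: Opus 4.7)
The plan is to prove (i) by contradiction; part (ii) follows symmetrically, working with $c\in(l,x_0)$ and stopping at $\tau_c\wedge\tau_\rho$ for $\rho\uparrow r$. Suppose $\mathbb{P}(S_-=\infty)=1$ and, toward a contradiction, that $v_c(l+;-\beta)<\infty$ for some $\beta>x_0$. By Corollary~\ref{fineteness_c} the finiteness is independent of $c\in I$, so I choose $c\in(x_0,\min(\beta,r))$, which is nonempty. Fix any $\gamma\in\mathbb{R}$; by \eqref{control_u_KS}, $u_c(l+;-\beta,-\gamma)=u_c(l+;-\beta)\le e^{v_c(l+;-\beta)}<\infty$, so $u_c(\cdot;-\beta,-\gamma)$ is bounded on every compact $[\lambda,c]$ with $\lambda\in(l,x_0)$.

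For such $\lambda$, set $\tau:=\tau_\lambda\wedge\tau_c$, so $X^\tau$ stays in $[\lambda,c]$. Apply It\^o's formula to $M_t:=e^{-(t\wedge\tau)}u_c(X_{t\wedge\tau};-\beta,-\gamma)$, using the semimartingale form \eqref{SDE_usare} of $X^\tau$ and the ODE \eqref{ODE_u}. On $\{x<c\}$ one has $\tilde b(x)-\tilde b_c(x;-\beta,-\gamma)=\frac{K'(0)}{K(0)}\beta$, so the standard Feller-type cancellation shows the drift of $M$ on $\{t\le\tau\}$ equals $e^{-t}\,u_c'(X^\tau_t)\,B_t$; absorbing the constant shift $\beta$ into the convolution (using $(K'\ast L)(0+)=K'(0)/K(0)$), one obtains
\[
B_t=\bigl(d(K'\ast L)\ast(X^\tau-\beta)\bigr)(t)+(\beta-x_0)(K'\ast L)(t).
\]

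The decisive sign analysis is then immediate: on $[0,\tau]$, $X^\tau_s\le c\le\beta$ gives $X^\tau_s-\beta\le 0$; Hypothesis~\ref{hyp_increasing} gives $d(K'\ast L)\ge 0$ and $K'\ast L\le 0$; and $\beta>x_0$. Hence both terms in $B_t$ are $\le 0$, while $u_c'\le 0$ on $(l,c)$ by monotonicity, so $u_c'(X^\tau_t)\,B_t\ge 0$ pathwise. Since $u_c$ is bounded on $[\lambda,c]$, $M$ is a bounded submartingale and $\mathbb{E}[M_t]\ge u_c(x_0;-\beta)$. Passing $t\to\infty$ by dominated convergence and then $\lambda\downarrow l$ --- using $\tau_\lambda\uparrow S_-=\infty$ a.s.\ to kill the $\{\tau_\lambda<\tau_c\}$ contribution (since $u_c(l+;-\beta)<\infty$), and monotone convergence on the $\{\tau_c\le\tau_\lambda\}$ side --- yields
\[
\mathbb{E}\bigl[e^{-\tau_c}\,1_{\{\tau_c<\infty\}}\bigr]\ge u_c(x_0;-\beta).
\]
The left-hand side is $\le 1$, whereas \eqref{control_u_KS} together with $v_c(x_0;-\beta)>0$ (since $x_0\ne c$) forces $u_c(x_0;-\beta)>1$, a contradiction. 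The main obstacle in this argument is precisely the pathwise sign control of $B_t$: it works because Hypothesis~\ref{hyp_increasing} makes $d(K'\ast L)$ a nonnegative measure and $K'\ast L$ nonpositive, and because the reduction $c\le\beta$ aligns the virtual upper bound $\beta$ with the actual upper bound $c$ of the stopped path $X^\tau$, making the convolution term tractable.
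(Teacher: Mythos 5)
Your proof is correct and follows essentially the same route as the paper's: both stop the process in a compact subinterval, transfer the dynamics into the $\tilde{b}_c(\cdot;\cdot,\cdot)$-form, apply the generalized It\^o formula to $e^{-t}u_c(\cdot)$ together with the ODE \eqref{ODE_u}, and exploit Hypothesis~\ref{hyp_increasing} plus the sign of $u_c'$ to show that the residual path-dependent drift of the exponentially weighted process has the right sign, yielding $\mathbb{E}[e^{-\tau_c}1_{\{\tau_c<\infty\}}]\ge u_c(x_0;-\beta)>1$ and a contradiction. The only (cosmetic) difference is bookkeeping: the paper first proves the result with the shift parameter set to $-c$ (so the sign of $(\dd(K'\ast L)\ast(X-c))(t)$ is immediate from $X\le c$), and then upgrades to general $\beta>x_0$ via Lemma~\ref{le_dip_betagamma} and Corollary~\ref{fineteness_c}; you instead work directly with the shift $-\beta$ after using Corollary~\ref{fineteness_c} to move $c$ below $\beta$, so that $X\le c\le\beta$ gives the same sign control and no final comparison via Lemma~\ref{le_dip_betagamma} is needed. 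You also stop at $\tau_\lambda\wedge\tau_c$ with $\lambda\downarrow l$ rather than $S_n\wedge\tau_c$ with $n\to\infty$, which is an equivalent localization. In short, correct proof, same key mechanism, slightly more direct parametrization.
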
 
Before the proof, notice that Theorem \ref{Feller_thm_nec} provides necessary conditions which depend on the starting point $x_0\in I$ of $X$. In particular, by virtue of Lemma \ref{le_dip_betagamma}, the requirement $v_c\big(l+;-{\beta}{}\big)=\infty$, for all $\beta>x_0,$ becomes more stringent as $x_0$ approaches $l$. On the other hand, the closer $x_0$ to $r$, the stricter the requirement $v_c\big(r-;-{\gamma}{}\big)=\infty,$ for all $\gamma<x_0$.
\begin{proof}
	We first prove Point \ref{nec_lower}. Given $x_0\in I$, we consider $c\in (x_0,r)$ and $\tau_c=\inf\{t\ge 0 : X_t\ge c\}$. Since  $K'\ast L\le0$ is nondecreasing in $\mathbb{R}_+$ by Hypothesis \ref{hyp_increasing}, for every continuous function $x\colon \mathbb{R}_+\to (-\infty,c]$ we have
	\begin{align}\label{nec_ser}
		\left(\dd\left(K'\ast L\right)\ast \left(x-x_0\right)\right)(h)
		&=
		\left(\dd\left(K'\ast L\right)\ast \left(x-c\right)\right)(h)+(c-x_0)\left((K'\ast L)(h)-(K'\ast L)(0)\right)
		\notag\\&\le
		(x_0-c)\frac{K'(0)}{K(0)},\quad h\ge 0.
	\end{align}
	From \eqref{SDE_usare}-\eqref{def_bc}, for every $n\in\mathbb{N}$, the dynamics in $\mathbb{R}_+$ of the $[l_n,c]-$valued stopped process $X^{S_n\wedge \tau_c}$ -- which is a continuous semimartingale --   are
	\begin{multline*}
		\dd X^{ S_n\wedge\tau_c }_t=1_{\{t\le  S_n\wedge\tau_c\}}\left[\tilde{b}_c\Big(X^{\tau_c}_t;-{c},-c{} \Big)\,\dd t+\tilde{\sigma}(X^{\tau_c}_t)\,\dd W_t\right]
		\\
		+1_{\{t\le  S_n\wedge\tau_c\}}\left[\left(\dd\left(K'\ast L\right)\ast \left(X^{\tau_c}-x_0\right)\right)(t)-\frac{K'(0)}{K(0)}\left(x_0-c\right)\right]\dd t.
	\end{multline*}
	Define ${Y}_t^{(n)}=e^{-(t\wedge S_n\wedge \tau_c)}u_c\big(X^{S_n\wedge \tau_c}_t;-c,-c\big),\,t\ge0.$ By the stochastic product rule,  the ODE \eqref{ODE_u} and Ito's lemma -- in a generalized form, since $u_c'\big(\cdot;-{c},-c{}\big)\in\,$AC$_\text{loc}((l,c])$, see for instance \cite[Theorem 7.1 and Problem 7.3, Chapter 3]{KS} -- we obtain
	\begin{align}\label{cancella1}
		\notag \!\!{Y}^{(n)}_t&=\int_{0}^{t}\!
		1_{\{h\le S_n\wedge \tau_c\}}e^{-h}u_c'\Big(X^{\tau_c}_h;-c,-c\Big)\left[\left(\dd\left(K'\ast L\right)\ast \left(X^{\tau_c}-x_0\right)\right)(h)-\frac{K'(0)}{K(0)}\left(x_0-c\right)\right]\!\dd h
		\\&\qquad +\int_{0}^{t}1_{\{h\le S_n\wedge \tau_c\}} e^{-h}u_c'\big(X^{\tau_c}_h;-{c},-c{}\big)\tilde{\sigma}(X^{\tau_c}_h)\,\dd W_h+u_c(x_0;-{c}{})\notag\\&
		\eqqcolon
		{\mathbf{\upperRomannumeral{1}}}^{(n)}_t+{\mathbf{\upperRomannumeral{2}}}^{(n)}_t+u_c(x_0;-{c}{}),
	\end{align}
	which holds for every $t\ge0,\,\mathbb{P}-\text{a.s.}$ Notice that  ${\mathbf{\upperRomannumeral{1}}}^{(n)}\ge0$ up to indistinguishability because of \eqref{nec_ser} and the fact that  $u'_c(\cdot;-{c}{})\le 0$ in $(l,c)$. Moreover, the stochastic integral process  ${\mathbf{\upperRomannumeral{2}}}^{(n)}$  is a  true martingale. Indeed, by the local boundedness of $\tilde{\sigma}$ and the continuity of $u'_c(\cdot;-c,-c)$ in $[l_n,c]$, the integrand process is bounded. Therefore,  taking expectations in \eqref{cancella1},
		\begin{equation}\label{proof_nec}
	u_c(x_0;-{c}{})\le \mathbb{E}\Big[{Y}^{(n)}_{t}\Big],\quad t\ge0,\,n\in\mathbb{N}.
\end{equation}
	 Suppose that $\mathbb{P}(S_-=\infty)=1$ and, arguing by contradiction, that $v_c\big(l+;-{c}{}\big)<\infty$. By \eqref{control_u_KS},
	\begin{equation*}
		u_c(x;-{c},-c{})\le C,\quad x\in (l,c],\text{ for some }C>0.
	\end{equation*}
In this setting, 	since $S\wedge \tau_c=S_+\wedge \tau_c=\tau_c$ almost surely, recalling the definition of $Y^{(n)}_t$, by the dominated convergence theorem   we compute the limit as  $n\to \infty$ in \eqref{proof_nec} to deduce that
	\begin{align*}
		&u_c(x_0;-{c}{})\le \mathbb{E}\Big[e^{-(t\wedge S\wedge \tau_c)}u_c(X_{t\wedge S\wedge \tau_c}+;-{c},-c )\Big]
		=
		\mathbb{E}\Big[
		e^{-(t\wedge \tau_c)}
		u_c(X_{t\wedge \tau_c};-{c},-c )
		\Big]
		\\&\quad
		=\mathbb{E}\Big[
		e^{-(t\wedge \tau_c)}
		u_c(X_{t\wedge \tau_c};-{c},-c{})\left(1_{\{\tau_c=\infty\}}+1_{\{\tau_c<\infty\}}\right)
		\Big]
		\le 
		Ce^{-t}+\mathbb{E}\Big[
		e^{-t\wedge \tau_c}
		u_c(X_{t\wedge \tau_c};-{c},-c )1_{\{\tau_c<\infty\}}
		\Big],
	\end{align*}
	which holds for every $t\ge0$.
	Letting $t\to\infty$, once again by dominated convergence we obtain the inequality 
	\[
u_c(x_0;-{c})\le 
	\mathbb{E}\Big[
	e^{-\tau_c}
	u_c(X_{ \tau_c};-{c})1_{\{\tau_c<\infty\}}
	\Big]
	\le u_c(c;-{c}),
	\] 
	which is absurd because $u_c(\cdot;-{c},-c)$ is (strictly) decreasing in $(l,c]$. Thus, $$v_c(l+;-{c})=\infty.$$ 
	In turn, this equality combined with Lemma \ref{le_dip_betagamma} yields $v_c(l+;-{\beta})=\infty$, for every $\beta\ge c$. Since we chose a generic $c\in (x_0,r)$, by Corollary \ref{fineteness_c} we conclude that 
	\[
	v_c(l+;-{\beta}{})=\infty,\quad
	\beta>x_0,\,c\in I,
	\]
	proving Point \ref{nec_lower}.\vspace{2mm}
	\\
	As for Point \ref{nec_upper}, the necessity of the condition $$v_c(r-;-{\gamma}{})=\infty,\quad \gamma<x_0,\,c\in I,$$ 
	for $\mathbb{P}(S_+=\infty)=1$ can be inferred similarly, this time arguing with $ c\in (l,x_0)$, defining $\tau_c=\inf\{t\ge 0 : X_t\le c\}$ and using the fact that $u_c(\cdot;-c)$ is (strictly) increasing in $[c,r)$. The proof is then complete.
\end{proof}
In the following theorem, we use the maps $v_c(\cdot;\beta,\gamma)$, for certain $\beta,\gamma\in \mathbb{R}$, to establish sufficient conditions  guaranteeing that $\mathbb{P}(S=\infty)=1$.  The hypotheses that we formulate in Theorem \ref{Feller_thm} are stronger than the necessary conditions in Theorem \ref{Feller_thm_nec} and differ from the classical ones in \cite{WF}. We refer to Remarks \ref{rem_cl_suff}-\ref{classical_case} and Corollary \ref{coro_nec_suff} for more details. 
\begin{theorem}\label{Feller_thm}
	 Fix $c\in I$ and assume Hypotheses \ref{Hyp1}-\ref{hyp_coeff}-\ref{hyp_increasing}. Consider the stopping times $S_-=\inf\{t\ge 0: X_t=l\}$ and  $S_+=\inf\{t\ge 0: X_t=r\}$, so that $S=\min\{S_-,S_+\}$. 
	\begin{enumerate}[label=(\roman*)]
	 	\item\label{suff_lower} If $\lim_{n\to \infty}v_c(l_n;-l_n)=\infty$, then $\mathbb{P}(\{S<S_-\}\cup\{S_-=\infty\} )=1$;
	 	\item\label{suff_upper}  if $\lim_{n\to \infty}v_c(r_n;-r_n)=\infty$, then $\mathbb{P}(\{S<S_+\}\cup\{S_+=\infty\} )=1$.
	 \end{enumerate}
	 
	 Consequently, the condition $$\lim_{n\to \infty}v_c(l_n;-l_n)=\lim_{n\to \infty}v_c(r_n;-r_n)=\infty$$ is sufficient for  $\mathbb{P}(S=\infty)=1$.
\end{theorem}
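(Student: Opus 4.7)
I would prove Points~(i) and (ii) by symmetric arguments --- combining them yields the final assertion --- so I focus on Point~(i). The strategy parallels the necessary-condition proof of Theorem~\ref{Feller_thm_nec} but uses, as \emph{the} key twist, two different shift parameters simultaneously. Consider $\tilde\tau = \tau_{l_n}\wedge\tau_{r_m}$ with $\tau_{l_n}=\inf\{t\ge 0: X_t\le l_n\}$ and $\tau_{r_m}=\inf\{t\ge 0: X_t\ge r_m\}$, for fixed $m$ large and $n$ growing, and apply a generalized It\^o formula (as in \cite[Theorem 7.1, Chapter 3]{KS}) to $e^{-t}u_c(X^{\tilde\tau}_t;-l_n,-r_m)$. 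Using the ODE~\eqref{ODE_u} to cancel the terms involving $\tilde{b}_c u_c'$ and $\tfrac12\tilde\sigma^2 u_c''$, the $\dd t$-drift on $\{t\le\tilde\tau\}$ reduces to $e^{-t}u_c'(X_t)\cdot R(t)$, where $R(t)$ is a residual built from the path-dependent convolution, the deterministic input $-x_0K'(0)/K(0)$, and the shift discrepancy $\tilde b(X)-\tilde b_c(X;-l_n,-r_m)$.

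The crucial computation is that with $\beta=-l_n$ and $\gamma=-r_m$ the factor $R(t)$ has sign opposite to $u_c'(X_t)$ on each side of $c$. Specifically, on $\{X<c\}$, writing $X-x_0=(X-l_n)+(l_n-x_0)$ inside the convolution and using $(K'\ast L)(0)=K'(0)/K(0)$, a direct calculation collapses $R(t)$ to
\[
R(t)=\bigl(\dd(K'\ast L)\ast(X-l_n)\bigr)(t)+(l_n-x_0)(K'\ast L)(t)\ge 0,
\]
by Hypothesis~\ref{hyp_increasing} (since $X\ge l_n$, $l_n<x_0$, and $K'\ast L\le 0$ is nondecreasing); combined with $u_c'\le 0$ on $(l,c)$ this forces $u_c'(X)R(t)\le 0$. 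On $\{X\ge c\}$ the symmetric decomposition using $r_m$ gives $R(t)\le 0$, while $u_c'\ge 0$ on $[c,r)$, so again $u_c'(X)R(t)\le 0$. Hence $e^{-(t\wedge\tilde\tau)}u_c(X_{t\wedge\tilde\tau};-l_n,-r_m)$ is a supermartingale; its stochastic-integral component is a true martingale because $u_c'$ and $\tilde\sigma$ are bounded on $[l_n,r_m]$.

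Taking expectations, letting $t\to\infty$ by dominated convergence, and restricting to $\{\tau_{l_n}\le T,\ \tau_{l_n}\le\tau_{r_m}\}$, the estimate~\eqref{control_u_KS} delivers
\[
\mathbb{P}\bigl(\tau_{l_n}\le T,\ \tau_{l_n}\le\tau_{r_m}\bigr)\le \frac{u_c(x_0;-l_n,-r_m)\,e^T}{1+v_c(l_n;-l_n)}.
\]
Since Hypothesis~\ref{hyp_increasing} forces $K'(0)\le 0$, Lemma~\ref{le_dip_betagamma} bounds the numerator uniformly in $n$ for fixed $m$ (monotonicity in $\beta$ plus the exponential bound $u_c\le e^{v_c}$), while the hypothesis makes the denominator diverge, so the right-hand side vanishes as $n\to\infty$. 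To pass from $\tau_{l_n},\tau_{r_m}$ back to $S_-,S_+$: on $\{S=S_-\le T\}$ the continuity of $X$ ensures $\sup_{t\le S_-}X_t<r$, hence $\tau_{r_m}>S_-$ for some $m$, and then $\tau_{l_n}\le S_-\le T$ with $\tau_{l_n}<\tau_{r_m}$ for every $n$ sufficiently large; a countable union over $m,T\in\mathbb{N}$ yields $\mathbb{P}(S=S_-<\infty)=0$, which is Point~(i). The main obstacle is the algebraic observation that the shift pair $(-l_n,-r_m)$ makes $R(t)$ flip sign precisely at $c$ in concert with $u_c'$; once this sign match is spotted, the rest is a standard Lyapunov-function argument in the spirit of the classical Feller test.
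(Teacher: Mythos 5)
Your proof is correct and reaches the same conclusion, but it organizes the passage to the limit differently from the paper. The paper stops $X$ at $S_n$ (exit time from $I_n=(l_n,r_n)$), sets $\widebar{Y}^{(n)}_t=e^{-(t\wedge S_n)}u_c(X^{S_n}_t;-l_n,-r_n)$, shows each $\widebar{Y}^{(n)}$ is a nonnegative supermartingale by the same sign matching you identify, and then invokes Fatou's lemma to promote $\widebar{Y}_t=\liminf_n\widebar{Y}^{(n)}_t$ to a supermartingale that is a.s.\ finite. The contradiction is extracted pointwise: on a bad event $\{S=S_-<t\}$ one has $X^{S_n}_t=l_n$ for $n$ large, so $\widebar{Y}_t\ge e^{-t}\lim_n(1+v_c(l_n;-l_n))=\infty$. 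You instead freeze the upper barrier at a fixed $r_m$, work with $\tilde\tau=\tau_{l_n}\wedge\tau_{r_m}$ (which is dominated by $S_N$ for $N$ large, so the semimartingale decomposition of Lemma~\ref{le_semi} applies), and turn the supermartingale inequality directly into an explicit exit-probability bound
\[
\mathbb{P}\bigl(\tau_{l_n}\le T,\ \tau_{l_n}\le\tau_{r_m}\bigr)\le \frac{u_c(x_0;-l_n,-r_m)\,e^T}{1+v_c(l_n;-l_n)},
\]
where the numerator is bounded uniformly in $n$ by the monotonicity of Lemma~\ref{le_dip_betagamma} together with $u_c\le e^{v_c}$ from \eqref{control_u_KS}, and the denominator diverges by hypothesis. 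You then recover the event $\{S=S_-<\infty\}$ by a countable union in $m$ and $T$, using continuity of $X$ on $[0,S_-]$ to produce a random $m$ with $\tau_{r_m}>S_-$. Both arguments hinge on the identical algebraic observation, namely that the shift pair $(-l_n,\,-\text{upper barrier})$ makes the residual $R(t)$ and $u_c'$ have opposite signs on each side of $c$; your rewriting of $R$ via $X-x_0=(X-l_n)+(l_n-x_0)$ and $(K'\ast L)(0)=K'(0)/K(0)$ is exactly the decomposition the paper performs. What your route buys is a quantitative tail estimate and the avoidance of the Fatou/liminf-supermartingale step, at the cost of a slightly more involved union-bound bookkeeping at the end; the paper's route is more compact because it treats both endpoints with a single shift pair $(-l_n,-r_n)$, but the Fatou-produced object $\widebar{Y}_t$ is less explicit. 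Either is a complete proof.
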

\begin{proof}
	Consider $c\in I$, where $I=(l,r)$ for some $-\infty\le l<r\le \infty$. Recalling \eqref{SDE_usare} and \eqref{def_bc}, 
	the dynamics of the stopped process $X^{S_n}$ in $\mathbb{R}_+$ read, for every $n\in \mathbb{N}$,
	\begin{multline}\label{negative}
		\dd X^{S_n}_t=1_{\{t\le S_n\}}\left[\tilde{b}_c(X_t;-l_n, -r_n)\,\dd t+\tilde{\sigma}(X_t)\,\dd W_t\right]
		\\
		+1_{\{t\le S_n\}}\left[\left(\dd\left(K'\ast L\right)\ast \left(X-x_0\right)\right)(t)+\frac{K'(0)}{K(0)}\left(l_n1_{\{X_t<c\}}+r_n1_{\{X_t\ge c\}}
		-x_0\right)\right]\dd t,
	\end{multline}
	with  $X^{S_n}_0=x_0$. 
	Define the continuous semimartingale  $\widebar{Z}^{(n)}_t=u_c(X^{S_n}_t;-l_n, -r_n),\,t\ge0$. 
	 An application of the generalized Ito's lemma in  \cite[Theorem 7.1 and Problem 7.3, Chapter 3]{KS}, along with the ODE \eqref{ODE_u} yields
	\begin{multline}\label{eq_pr_Fe}
		\dd \widebar{Z}^{(n)}_t=1_{\{t\le S_n\}}\left(u_c\big(X_t;-l_n, -r_n\big)\dd t +u'_c\big(X_t;-l_n, -r_n\big)\tilde{\sigma}(X_t)\,\dd W_t\right)
		\\+
		1_{\{t\le S_n\}}u_c'\big(X_t;-l_n, -r_n\big)\left[\left(\dd\left(K'\ast L\right)\ast \left(X-x_0\right)\right)(t)+\frac{K'(0)}{K(0)}\left(l_n 1_{\{X_t<c\}}+r_n1_{\{X_t\ge c\}}-x_0\right)\right]\dd t,
	\end{multline}
	with initial condition  $\widebar{Z}^{(n)}_0=u(x_0;-l_n, -r_n)$. Then, if we introduce the process $\widebar{Y}_t^{(n)}=e^{-t\wedge S_n}\widebar{Z}_t^{(n)},\,t\ge0$, the stochastic product rule implies that
	\begin{align}\label{supermg_Y}
		\notag \widebar{Y}^{(n)}_t\!&=\!\int_{0}^{t}\!\!
		1_{\{h\le S_n\}}e^{-h}u_c'(X_h;-l_n, -r_n)\!\left[\left(\dd\left(K'\ast L\right)\ast \left(X-x_0\right)\right)\!(h)\!+\frac{K'(0)}{K(0)}\left(l_n1_{\{X_h<c\}}\!+r_n1_{\{X_h\ge  c\}}\!-x_0\right)\right] \!\dd h
	\\&	\notag\quad +\int_{0}^{t}1_{\{h\le S_n\}} e^{-h}u_c'(X_h;-l_n, -r_n)\tilde{\sigma}(X_h)\,\dd W_h+u_c(x_0;-l_n, -r_n)\\&\eqqcolon
		\widebar{\mathbf{\upperRomannumeral{1}}}^{(n)}_t+\widebar{\mathbf{\upperRomannumeral{2}}}^{(n)}_t+u_c(x_0;-l_n, -r_n),\quad t\ge0,\,\mathbb{P}-\text{a.s.}
	\end{align}
	We concentrate on $\widebar{\mathbf{\upperRomannumeral{1}}}^{(n)}_{t}\eqqcolon\int_{0}^{t}g^{(n)}(h)\,\dd h,\,t\ge0$, with the aim of demonstrating that it is a nonpositive supermartingale when $n\in \mathbb{N}$ is large enough. To do this, it suffices to show that the integrand $g^{(n)}\le 0$ for $n$ large enough. We then consider $\bar{n}\in \mathbb{N}$ sufficiently big such that $x_0\in I_n=(l_n,r_n)$, for every $n\ge \bar{n}$. By Hypothesis~\ref{hyp_increasing}, $K'\ast L\le0$ is nondecreasing in $\mathbb{R}_+$, whence, for every $n\ge \bar{n}$ and every continuous map $x\colon \mathbb{R}_+\to I_n$, 
	\begin{align*}
		\left(\dd\left(K'\ast L\right)\ast \left(x-x_0\right)\right)(h)-x_0\frac{K'(0)}{K(0)}
		&=(r_n-x_0)(K'\ast L)(h)+
		(\dd\left(K'\ast L\right)\ast \left(x-r_n\right))(h)-r_n\frac{K'(0)}{K(0)}
		\\&\le
		-r_n\frac{K'(0)}{K(0)},\quad h\ge 0.
	\end{align*}
	Analogously,
	\begin{align*}
		\left(\dd\left(K'\ast L\right)\ast \left(x-x_0\right)\right)(h)-x_0\frac{K'(0)}{K(0)}
	&=(l_n-x_0)(K'\ast L)(h)+
	(\dd\left(K'\ast L\right)\ast \left(x-l_n\right))(h)-l_n\frac{K'(0)}{K(0)}
	\\&\ge
	-	l_n\frac{K'(0)}{K(0)},\quad h\ge 0.
	\end{align*}
	Since  $u'_c(\cdot;-l_n)\le0$ in $(l,c)$ and $u'_c(\cdot;-r_n)\ge0$ in $[c,r)$, the two previous equations show that $g^{(n)}\le 0$, as desired.
 \\Going back to \eqref{supermg_Y} and recalling that $u_c(\cdot;-l_n,-r_n)$ is (strictly) positive on $I$, which entails that $\widebar{Y}^{(n)}_t\ge0$ by definition, we observe that the process  $u_c(x_0;-l_n,-r_n)+\widebar{\mathbf{\upperRomannumeral{2}}}^{(n)}_t,\,t\ge0,$ is a nonnegative local martingale for $n\ge \bar{n}$. Indeed,  \eqref{supermg_Y} gives
	$$u_c(x_0;-l_n,-r_n)+\widebar{\mathbf{\upperRomannumeral{2}}}^{(n)}_t=\widebar{Y}_t^{(n)}-\widebar{\mathbf{\upperRomannumeral{1}}}^{(n)}_t,\quad t\ge0,\,\mathbb{P}-\text{a.s.}$$
	Hence it is a supermartingale, and so is $\widebar{Y}^{(n)}$. By Fatou's lemma, we can claim that also the $[0,\infty]-$valued $\liminf-$process $\widebar{Y}=(\widebar{Y}_t)_{t\ge0}$ given by
	\begin{equation}\label{widebary}
	\widebar{Y}_t=
	\liminf_{n\to\infty}\widebar{Y}^{(n)}_t=
	e^{-(t\wedge S)}\liminf_{n\to \infty}u_c\big(X_t^{S_n};-l_n,-r_n\big)
	,\quad t\ge0,
	\end{equation}
	is a supermartingale. Note that, for every $t\ge0$, the random variable $\widebar{Y}_t$ is finite $\mathbb{P}-$a.s.,  because $t\mapsto \mathbb{E}[\widebar{Y}_t]$ is nonincreasing and starts at a finite deterministic value $\widebar{Y}_0$. Indeed, by \eqref{control_u_KS} and \eqref{widebary}, 
	\[
		\widebar{Y}_0= \liminf_{n\to \infty}u_c(x_0;-l_n,-r_n)\le \liminf_{n\to \infty} e^{v_c(x_0;-l_n,-r_n)}\le e^{v_c(x_0;-l_1,-r_1)}<\infty,
	\]  
	where we apply Lemma \ref{le_dip_betagamma} for the second-to-last inequality. 
	\vspace{2mm}\\
	We now prove the implication in Point \ref{suff_lower}: the one in Point \ref{suff_upper} can be shown with analogous arguments. By contradiction, suppose that there exists $t>0$ such that $\mathbb{P}(\{S=S_-\}\cap \{S_-<  t\})>0$. Then, writing $\Omega'=\{S=S_-\}\cap \{S_-<  t\}$,  
	\[
		X^{S_-}_t(\omega)=l,\quad \omega\in \Omega'.
	\]
	 Observe that, by the continuity of the process $X$, for every $\omega\in \Omega'$ there is an $N(\omega)\in \mathbb{N}$ so big  that $l_n<c$ and that 
	\[
		u_c\big(X^{S_n}_t(\omega);-l_n,-r_n\big)=u_c(l_n;-l_n)\ge 1+v_c(l_n;-l_n), \quad n\ge N(\omega),
	\]
	where we use   \eqref{control_u_KS} for the inequality.
	Thus, from \eqref{widebary} and the hypotheses of this theorem,
	\[
		\widebar{Y}_t(\omega)= e^{-S(\omega)}\liminf_{n\to \infty}u_c\big(X_t^{S_n}(\omega);-l_n,-r_n\big)\ge
		e^{-S(\omega)} \lim_{n\to \infty}v_c(l_n;-l_n)=\infty,\quad \omega\in \Omega',
	\]
	which is absurd because we have argued that $\widebar Y_t<\infty$ almost surely.
	Therefore,
	$$\mathbb{P}(\{S=S_-\}\cap \{S_-<  t\})=0 \quad \text{for every $t\ge 0$}.$$
	 Consequently,
		\[
		\mathbb{P}(\{S=S_-\}\cap \{S_-<\infty\})\le 
		\mathbb{P}\Big(\{S=S_-\}\cap \Big(\bigcup_{n\in\mathbb{N}}\{S_-< n\}\Big)\Big)=0,
	\]
	 which completes the proof.
\end{proof}
\begin{rem}\label{rem_cl_suff}
	Theorem \ref{Feller_thm} can be applied to general, possibly unbounded intervals. Roughly speaking, in the proof, the fact that the endpoints of $I$ might be infinite is handled  by adding terms depending on the (finite) quantities $l_n$ and $r_n$. This enables to control  the sign of the  path-dependent term in the dynamics \eqref{SDE_usare} of the stopped processes $X^{S_n}$.  Such an argument requires a sufficient condition involving the sequences $(v_c(l_n;-l_n))_n$ and $(v_c(r_n;-r_n))_n$, contrary to the classical formulation of Feller's test for explosions, see \cite{WF} and \cite{KS}.  \vspace{2mm}\\However, when $l>-\infty$ or $r<\infty$, it is possible to impose stronger requirements which satisfy the hypotheses of Theorem \ref{Feller_thm} and resemble more the classical ones.
		More specifically, suppose that $l>-\infty$ and that $v_c(l+;-l)=\infty$. Then, by Lemma \ref{le_dip_betagamma},
	\[
		\liminf_{n\to \infty}v_c(l_n;-l_n)\ge \liminf_{n\to \infty}v_c(l_n;-l)=v_c(l+;-l)=\infty,
	\]
	whence $\lim_{n\to \infty}v_c(l_n;-l_n)=\infty$. Analogously, if $r<\infty$, the condition $\lim_{n\to \infty }v_c(r_n;-r_n)=\infty$ is  implied by $v_c(r-;-r)=\infty$.
\end{rem}
{\color{black}Under stronger assumptions on the modified diffusion $\tilde{\sigma}$, it is possible to combine Theorems \ref{Feller_thm_nec} and \ref{Feller_thm} to deduce a necessary and sufficient condition for $\mathbb{P}(S=\infty)=1$. More precisely,  by Lemma \ref{le_dip_betagamma} 
	and Remark \ref{rem_cl_suff}, the next result holds.
\begin{corollary}\label{coro_nec_suff}
	 Assume Hypotheses \ref{Hyp1}-\ref{hyp_coeff}-\ref{hyp_increasing}. 
	Consider the stopping times $S_-=\inf\{t\ge 0: X_t=l\}$ and  $S_+=\inf\{t\ge 0: X_t=r\}$, so that $S=\min\{S_-,S_+\}$. Furthermore, suppose that 
		 $I\subset \mathbb{R}$ is a bounded interval and that $\tilde{\sigma}^{-2}$ is integrable in $I$.
Then, given $c\in I$ and $\beta,\,\gamma\in \mathbb{R}$,
\begin{enumerate}[label=(\roman*)]
	\item \label{suffnec_lo} the condition $v_c(l+;\beta)=\infty$ is necessary for $\mathbb{P}(S_-=\infty)=1$ and sufficient for $\mathbb{P}(\{S<S_-\}\cup\{S_-=\infty\} )=1$;
	\item \label{suffnec_up} the condition $v_c(r-;\gamma)=\infty$ is necessary for $\mathbb{P}(S_+=\infty)=1$ and sufficient for $\mathbb{P}(\{S<S_+\}\cup\{S_+=\infty\} )=1$.
\end{enumerate}
In particular, $$\text{$\mathbb{P}(S=\infty)=1$ \quad if and only if \quad $v_c(l+;\beta)=v_c(r-;\gamma)=\infty.$}$$
\end{corollary}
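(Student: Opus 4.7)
The plan is to assemble the corollary from Theorems \ref{Feller_thm_nec} and \ref{Feller_thm} together with the shift-comparison estimates in Lemma \ref{le_dip_betagamma}, so that the $\beta,\gamma$-dependence of the necessary and sufficient conditions collapses into a single parameter-free criterion.

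The preliminary step is to check that Lemma \ref{le_dip_betagamma} applies here. Since $K' \ast L$ is right-continuous and $(K'\ast L)(0) = K'(0) L(\{0\}) = K'(0)/K(0)$, Hypothesis \ref{hyp_increasing} forces $K'(0)\le 0$. Moreover, the boundedness of $I$ combined with the integrability of $\tilde{\sigma}^{-2}$ on $I$ activates the second, reverse estimate in Lemma \ref{le_dip_betagamma}: for any admissible pairs $(\beta_1,\gamma_1),(\beta_2,\gamma_2)\in\mathbb{R}^2$, the maps $v_c(\cdot;\beta_1,\gamma_1)$ and $v_c(\cdot;\beta_2,\gamma_2)$ are comparable up to a multiplicative constant on all of $I$. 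Passing to boundary limits, this means that the finiteness of $v_c(l+;\beta)$ does not depend on $\beta \in \mathbb{R}$, and symmetrically the finiteness of $v_c(r-;\gamma)$ does not depend on $\gamma \in \mathbb{R}$. This is the key technical observation underpinning the corollary.

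With it in hand, Part \ref{suffnec_lo} follows smoothly. For necessity, $\mathbb{P}(S_- = \infty) = 1$ yields $v_c(l+;-\beta') = \infty$ for every $\beta' > x_0$ via Theorem \ref{Feller_thm_nec}\ref{nec_lower}, and the above observation upgrades this to $v_c(l+;\beta) = \infty$ for every $\beta \in \mathbb{R}$. For sufficiency, the same observation rewrites $v_c(l+;\beta) = \infty$ as $v_c(l+;-l) = \infty$; since $l > -\infty$ (because $I$ is bounded), Remark \ref{rem_cl_suff} then gives $\lim_n v_c(l_n;-l_n) = \infty$, so Theorem \ref{Feller_thm}\ref{suff_lower} delivers $\mathbb{P}(\{S<S_-\}\cup\{S_-=\infty\})=1$. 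Part \ref{suffnec_up} is completely symmetric, invoking $r < \infty$.

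The final equivalence is obtained by exploiting $S = \min\{S_-, S_+\}$, so that $\{S = \infty\} = \{S_- = \infty\} \cap \{S_+ = \infty\}$. The forward direction uses the two necessary halves of (i) and (ii). For the converse, I would observe that on $\{S_- < \infty\}\cap\{S_+<\infty\}$ one has $S\in\{S_-,S_+\}$, so either $S = S_-$ (killing the condition in (i)) or $S = S_+$ (killing the condition in (ii)); a similarly short case analysis on $\{S_-=\infty,\,S_+<\infty\}$ and its symmetric yields $(\{S<S_-\}\cup\{S_-=\infty\}) \cap (\{S<S_+\}\cup\{S_+=\infty\}) = \{S_-=\infty\}\cap\{S_+=\infty\}$. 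I do not anticipate any substantive obstacle; the whole argument is bookkeeping once the reverse estimate in Lemma \ref{le_dip_betagamma} is recognized as the mechanism that removes the dependence on the shift parameters.
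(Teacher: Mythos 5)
Your proof is correct and follows essentially the same route the paper indicates: combine Theorems \ref{Feller_thm_nec} and \ref{Feller_thm} with the two-sided comparison in Lemma \ref{le_dip_betagamma} (whose applicability follows from $K'(0)\le 0$, a consequence of Hypothesis \ref{hyp_increasing}, and the integrability of $\tilde{\sigma}^{-2}$) and Remark \ref{rem_cl_suff}. Your explicit verification of the event identity $(\{S<S_-\}\cup\{S_-=\infty\})\cap(\{S<S_+\}\cup\{S_+=\infty\})=\{S=\infty\}$ is a detail the paper leaves implicit, but it is exactly the bookkeeping required.
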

\begin{rem}\label{classical_case}
	In the classical case $K\equiv K(0)$, for some positive constant $K(0)>0$, \eqref{SDE_VolterraC1} reduces to a standard It\^o diffusion without path-dependent component. In this context, since $K'(0)=0$, it directly follows from the definitions in \eqref{scale_func_def} and \eqref{v_def}-\eqref{v_def2} that  $p_c(\cdot; \beta,\gamma)=p_c(\cdot;0,0)$ and $v_c(\cdot;\beta,\gamma)=v_c(\cdot;0,0)$, which means that the maps $p_c(\cdot; \beta,\gamma)$ and $v_c(\cdot;\beta,\gamma)$ do not depend on $\beta$ or $\gamma$. As a result, the conclusions of Corollary~\ref{coro_nec_suff} continue to hold even removing the hypotheses of boundedness for the interval $I$ and of integrability  for the modified diffusion $\tilde{\sigma}$.  Hence, in this framework, Corollary \ref{coro_nec_suff} coincides with the classical Feller's test for explosions in \cite{WF}.
	
	 We also notice that positive constants are the only kernels for which this argument applies. Indeed, if we consider a map $K\in C^1(\mathbb{R}_+)$, with $K'(0)=0$, satisfying Hypotheses \ref{Hyp1}-\ref{hyp_increasing}, then $(K'\ast L)(0)=0$. Since $K'\ast L\le 0$ is nondecreasing by Hypothesis \ref{hyp_increasing}, it follows that  $K'\ast L=0$ in $\mathbb{R}_+$. Therefore, convolving with $K$, the resolvent identity and the fundamental theorem of calculus imply that $K(t)-K(0)=(K'\ast 1)(t)=0$ for every  $t\ge0$,  that is, $K\equiv K(0)$.
\end{rem}

}

\section{Examples}\label{sec_ex}
The purpose of  this section is to apply the results of the previous analysis, especially the necessary conditions in Theorem \ref{Feller_thm_nec} and the sufficient ones in Theorem \ref{Feller_thm}, to three specifications of the dynamics \eqref{SDE_VolterraC1} that are relevant in the literature.  In particular, in Subsection \ref{subsection_CIR} our  focus lies on Volterra CIR processes, for which the state space ($\mathbb{R}_+$) boundary attainment  coincides with the strict positivity of the trajectories. Subsection \ref{subsection_JV} deals instead  with Volterra Jacobi processes, characterized by having a bounded interval as their state space. Finally, in Subsection \ref{subsection_power}, we consider  Volterra power-type diffusion processes, for which the interest is in the possibility of having blow-ups to $\pm\infty$ in finite time.\\
In Subsections \ref{subsection_CIR} and \ref{subsection_JV}, in order to guarantee the existence of  weak solutions to the equations under scrutiny (see \eqref{VolterraCIR} and \eqref{VJac} below),  we suppose that the nonexplosive kernel $K$ satisfies the following requirement in addition to Hypotheses \ref{Hyp1}-\ref{hyp_increasing}.
\begin{hyp}\label{hyp_res}
	The kernel $K$ is nonnegative and nonincreasing. Moreover,  its resolvent of the first kind $L$ is nonincreasing, i.e., the map $ s\mapsto L([s,s+t])$ is nonincreasing for every $t\ge0$.
\end{hyp}
Hypothesis \ref{hyp_res} -- a standard assumption in the literature on Volterra processes in both continuous (see \cite{Pol_sergio, sergio}) and jump (see \cite{ABI_j, BPL}) cases --  is satisfied, for example, by completely monotone kernels.

\subsection{The Volterra CIR process}\label{subsection_CIR}
Let $\kappa, \theta,\sigma>0$ and consider the SVE
\begin{equation}\label{VolterraCIR}
			X_t=x_0+\int_{0}^{t}K(t-s)\kappa(\theta-X_s)\,\dd s+ \sigma \int_{0}^{t}K(t-s)\sqrt{X_s}\,\dd W_s,\quad  t\ge 0, \,\mathbb{P}-\text{a.s.},
\end{equation}
where the starting point $x_0>0$.
Since $K$ belongs to $ C^1(\mathbb{R}_+)$ and admits a resolvent of the first kind satisfying Hypothesis \ref{hyp_res}, the weak existence of a continuous nonnegative solution of \eqref{VolterraCIR} is guaranteed by \cite[Theorem 3.6]{sergio}.  In fact, \eqref{VolterraCIR} admits a pathwise unique strong solution, as shown in \cite[Proposition B.3]{ABE1} by adapting the original strategy in \cite[Theorem 1]{YW}. We call such solution $X=(X_t)_{t\in\mathbb{R}_+}$ a \emph{Volterra CIR process}. Using the results of Section \ref{main}, we aim to determine conditions on the parameters $\kappa,\,\theta,\,\sigma$ and on the initial state $x_0$ under which the trajectories of $X$ attain (or do not attain) the value $0$. 
\vspace{2mm}

We fix  $l=0$, $r=\infty$ and define the maps $\tilde{b},\,\tilde{\sigma}\colon (0,\infty)\to \mathbb{R}$ by
\[
	\tilde{b}(x)=K(0)\kappa(\theta-x)+\frac{K'(0)}{K(0)}x,\qquad \tilde{\sigma}(x)=K(0)\sigma\sqrt{x},\quad x\in (0,\infty).
\]
Since $X$ is a global solution of \eqref{VolterraCIR} and $r=\infty$, we notice that $$S=S_-=\inf\{t\ge 0:X_t=0\}.$$
Let $c=1$ and, to simplify the notation, $C=2(K(0)\sigma)^{-2}$. Given $\beta,\,\gamma\in\mathbb{R}$, the derivative of the scale function $p_1(\cdot;\beta,\gamma)\colon(0,\infty)\to \mathbb{R}$ associated with the dynamics \eqref{VolterraCIR} is (cfr. the definitions in \eqref{def_bc}-\eqref{scale_func_def}) 
\begin{align}\label{p1_CIR-}
\notag	p'_1(x;\beta)&=\exp\left\{ -\frac{2}{K(0)^2\sigma^2}\int_{1}^{x}\frac{1}{y}\left(K(0)\kappa(\theta-y)+\frac{K'(0)}{K(0)}(y+\beta)\right) \dd y\right\}
	\\&=
\notag	\exp\left\{ -C\left(\left(\frac{K'(0)}{K(0)}-K(0)\kappa\right)(x-1)+\left(K(0)\kappa\theta+\beta\frac{K'(0)}{K(0)}\right)\log x\right)\right\}
	\\&
	=
	x^{-C\left(K(0)\kappa\theta+\beta{K'(0)}{K^{-1}(0)}\right)}
	\exp\left\{ -C\left(\frac{K'(0)}{K(0)}-K(0)\kappa\right)(x-1)\right\}
	,\quad x\in (0,1).
\end{align}
By analogous computations,  
\begin{align}\label{p1_CIR+}
	p'_1(x;\gamma)=x^{-C\left(K(0)\kappa\theta+\gamma{K'(0)}{K^{-1}(0)}\right)}
	\exp\left\{ -C\left(\frac{K'(0)}{K(0)}-K(0)\kappa\right)(x-1)\right\}
	,\quad x\in [1,\infty).
\end{align}
As for the nonnegative function $v_1(\cdot;\beta,\gamma) \colon (0,\infty)\to\mathbb{R}_+$ defined in \eqref{v_def2}, we suppose that $K(0)\kappa\theta+\beta{K'(0)}{K^{-1}(0)}\neq 0$. Then  the following lower bound for $v_1(\cdot;\beta)$ holds in the interval $(0,1)$:
\begin{align}\label{lower_b_V}
	v_1(x;\beta)&\notag=C\int_{x}^{1}p'_1(y;\beta)\left(\int_{y}^{1}z^{
C\left(K(0)\kappa\theta+\beta{K'(0)}{K^{-1}(0)}\right)-1	
}
\exp\left\{ {C(z-1)}\left(\frac{K'(0)}{K(0)}-K(0)\kappa\right)\right\}
\dd z \right)\dd y\\&\notag
\ge 
C_1
\left(K(0)\kappa\theta+\beta{K'(0)}{K^{-1}(0)}\right)^{-1	}
\int_{x}^{1}p'_1(y;\beta)
\left(1-y^{C\left(K(0)\kappa\theta+\beta{K'(0)}{K^{-1}(0)}\right)}\right)
\dd y
\\
&
\ge C_1\int_{x}^{1}\left|y^{-C\left(K(0)\kappa\theta+\beta{K'(0)}{K^{-1}(0)}\right)}-1\right|\dd y
,\quad x\in (0,1).
\end{align}
 We can also establish an analogous upper bound on the same interval, given by
\begin{equation}\label{upper_b_V}
	 v_1(x;\beta)\le 
  C_2\int_{x}^{1}\left|y^{-C\left(K(0)\kappa\theta+\beta{K'(0)}{K^{-1}(0)}\right)}-1\right|\dd y
	,\quad x\in (0,1).
\end{equation}
Here $C_1,\,C_2>0$ are  positive constants, possibly depending on $K,\,\kappa,\,\theta,\,\sigma$ and $\beta$, which are  allowed to change from line to line. Combining these estimates with the necessary and sufficient conditions in Subsection \ref{subsec_v}, we obtain the next theorem. 
\begin{theorem}\label{Fel_CIR}
	Assume Hypotheses \ref{Hyp1}-\ref{hyp_increasing}-\ref{hyp_res}.
	\begin{enumerate} [label=(\roman*)]
		\item \label{nec_CIR} Suppose that $K'(0)<0$. Then, the condition $x_0\ge {K(0)^2}(2|K'(0)|)^{-1}(K(0)\sigma^2-2\kappa\theta)$ is necessary for $\mathbb{P}(S=\infty)=1$.
		\item \label{suff_CIR} The condition $2\kappa\theta\ge K(0)\sigma^2$ is sufficient for  $\mathbb{P}(S=\infty)=1$.
	\end{enumerate}
\end{theorem}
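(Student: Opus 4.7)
Since \cite[Theorem 3.6]{sergio} and \cite[Proposition B.3]{ABE1} guarantee that $X$ is a globally defined $[0,\infty)$--valued process, we have $S_+ = \infty$ identically, so $S = S_-$ and both statements reduce to analyzing first hitting of $0$. I will therefore apply only the ``lower'' halves of Theorems \ref{Feller_thm_nec} and \ref{Feller_thm}, with $c = 1$, and use the two-sided estimates \eqref{lower_b_V}--\eqref{upper_b_V}. Throughout, write $a(\beta) := C\big(K(0)\kappa\theta + \beta K'(0)/K(0)\big)$, where $C = 2/(K(0)\sigma)^2$, so that by \eqref{p1_CIR-} the exponent of $y$ inside $p'_1(y;\beta)$ is $-a(\beta)$.

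\textbf{Step 1 (necessity of \ref{nec_CIR}).} By Theorem \ref{Feller_thm_nec}\ref{nec_lower}, $\mathbb{P}(S=\infty)=1$ forces $v_1(0+;-\beta) = \infty$ for every $\beta > x_0$. The upper bound \eqref{upper_b_V} evaluated at $-\beta$ shows that $v_1(0+;-\beta) < \infty$ whenever the integral $\int_0^1 y^{-a(-\beta)}\,\dd y$ converges, i.e. when $a(-\beta) < 1$. Hence the necessary condition becomes $a(-\beta) \ge 1$ for every $\beta > x_0$. Substituting and using $K'(0) < 0$ to divide by $-K'(0)/K(0) > 0$ yields
\[
\beta \;\ge\; \frac{K(0)^2}{2|K'(0)|}\bigl(K(0)\sigma^2 - 2\kappa\theta\bigr) \qquad \text{for every } \beta > x_0,
\]
which is equivalent to the asserted lower bound on $x_0$.

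\textbf{Step 2 (sufficiency of \ref{suff_CIR}).} By Theorem \ref{Feller_thm}\ref{suff_lower} (combined with $S_+=\infty$), it suffices to show $\lim_n v_1(l_n;-l_n) = \infty$ along any sequence $l_n \downarrow 0$. Set $a_n := a(-l_n) = C\bigl(K(0)\kappa\theta - l_n K'(0)/K(0)\bigr)$; since $K'(0) \le 0$, we have $a_n \ge C K(0)\kappa\theta = 2\kappa\theta/(K(0)\sigma^2) \ge 1$ under the hypothesis. Feeding $\beta = -l_n$ into \eqref{lower_b_V} gives
\[
v_1(l_n;-l_n) \;\ge\; C_1 \int_{l_n}^1\bigl|y^{-a_n} - 1\bigr|\,\dd y,
\]
and an elementary computation of this last integral yields order $l_n^{1-a_n}/(a_n-1)$ if $a_n > 1$ and order $-\log l_n$ if $a_n = 1$; both diverge as $n \to \infty$. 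It remains to verify that the implicit constant $C_1$ (and the factor $a_n^{-1}$ hidden in the derivation of \eqref{lower_b_V}) stay bounded away from $0$ uniformly in $n$, which is immediate because $a_n$ lies in a bounded interval and the exponential factor in \eqref{p1_CIR-} does not depend on $\beta$.

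\textbf{Main obstacle.} The computations themselves are routine power-law integrals; the one point that genuinely requires care is the simultaneous variation, in the sufficient part, of the integration endpoint $l_n$ and of the shift parameter $\beta = -l_n$ inside the exponent $a_n$. The above observation that $-l_n K'(0) \ge 0$ \emph{reinforces} rather than weakens the asymptotic exponent $a_\infty = 2\kappa\theta/(K(0)\sigma^2)$, so the borderline case $2\kappa\theta = K(0)\sigma^2$ still produces divergence (either logarithmic, if $K'(0)=0$, or power-like, if $K'(0) < 0$), and no refined estimate beyond \eqref{lower_b_V}--\eqref{upper_b_V} is needed.
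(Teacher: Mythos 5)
Your proposal is correct and uses the same overall strategy as the paper: apply Theorems~\ref{Feller_thm_nec} and~\ref{Feller_thm} to the two-sided power-law estimates \eqref{lower_b_V}--\eqref{upper_b_V}. Two small differences in execution are worth noting. In the necessity step you take the contrapositive directly, while the paper argues by contradiction; both are fine, but recall that \eqref{upper_b_V} was derived under $K(0)\kappa\theta + \beta K'(0)/K(0) \neq 0$, so strictly speaking one should perturb $\beta$ slightly above $x_0$ to guarantee that hypothesis, which is exactly what the paper does by choosing a $\bar{\beta}>x_0$ with $\bar{\beta}-x_0$ small -- an easy fix you leave implicit. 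In the sufficiency step the paper takes a shorter route via Remark~\ref{rem_cl_suff}: since $l=0>-\infty$, it suffices to check the \emph{single} limit $v_1(0+;0)=\infty$, because Lemma~\ref{le_dip_betagamma} yields $v_1(l_n;-l_n)\ge v_1(l_n;0)$ with the shift parameter frozen at $0$, so no uniformity in $n$ is needed. You instead estimate $v_1(l_n;-l_n)$ directly with $\beta=-l_n$ varying inside the exponent; this forces you to check that the constant $C_1=C_1(l_n)$ in \eqref{lower_b_V} (which absorbs a factor $a_n^{-1}$) and the exponential prefactor stay bounded away from zero, which you correctly observe follows from $a_n$ lying in a compact subinterval of $(0,\infty)$ and the exponential factor in \eqref{p1_CIR-} being $\beta$-independent. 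Both arguments are sound; the paper's invocation of Remark~\ref{rem_cl_suff} simply removes the extra bookkeeping.
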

\begin{proof}
	We start by proving Point \ref{nec_CIR} by contradiction. Thus, we consider $$x_0<{K(0)^2}(2|K'(0)|)^{-1}(K(0)\sigma^2-2\kappa\theta)
	\quad \Longleftrightarrow \quad 
C\left[ K(0)\kappa \theta+x_0 \frac{|K'(0)|}{K(0)}\right]< 1
	.$$
	We can then select a  $\bar{\beta}>x_0>0$ such that $ K(0)\kappa \theta+\bar{\beta} {K(0)}|K'(0)|^{-1}\neq 0$ and $\bar{\beta}-x_0>0$ is  so small that the previous bounds hold replacing $\bar{\beta}$ for $x_0$. 
	As a result, by \eqref{upper_b_V}, for  a positive constant $C_{\bar{\beta}}$,
	\[
	v_1(0+;-\bar{\beta})\le C_{\bar{\beta}}\int_{0}^{1}\left|y^{-C\left(K(0)\kappa\theta+\bar{\beta}{|K'(0)|}{K^{-1}(0)}\right)}-1\right|\dd y<\infty,
	\]
where we use the fact that $|K'(0)|=-K'(0)$. Hence the necessary condition in    Theorem \ref{Feller_thm_nec} \ref{nec_lower} is violated, which yields $\mathbb{P}(S<\infty)>0$. This is absurd, demonstrating Point \ref{nec_CIR}.\vspace{2mm}

As for Point \ref{suff_CIR}, we assume $2\kappa\theta \ge K(0)\sigma^2$ and observe that, by   \eqref{lower_b_V}, 
\[
	v_1(0+;0)\ge \widetilde{C}\int_{0}^{1}\left(y^{-CK(0)\kappa\theta}-1\right)\dd y=\infty,\quad \text{for some }\widetilde{C}>0.
\]
Therefore, thanks to Remark \ref{rem_cl_suff},  we invoke Theorem \ref{Feller_thm} \ref{suff_lower}  to conclude that  $\mathbb{P}(S=\infty)=1$. The proof is then complete. 
\end{proof}
\begin{rem}\label{classical_CIR}
	When $K\equiv K(0),$ for some positive constant $K(0)>0$, we can strengthen the results in Theorem \ref{Fel_CIR} to obtain a necessary and sufficient condition for $\mathbb{P}(S=\infty)=1$. More specifically, 
	\begin{equation}\label{fel_con_CIR}
		\text{$\mathbb{P}(X_t>0,\,t\in\mathbb{R}_+)=1\quad $ if and only if $\quad 2\kappa \theta\ge K(0)\sigma^2$.}
	\end{equation}
Indeed, the fact that  $2\kappa \theta\ge K(0)\sigma^2 \,\Longrightarrow\, \mathbb{P}(X_t>0,\,t\in\mathbb{R}_+)=1$ is guaranteed by Theorem \ref{Fel_CIR} \ref{suff_CIR}.  The reverse implication is instead obtained by Corollary \ref{coro_nec_suff} \ref{suffnec_lo} and Remark \ref{classical_case}. Notice that, choosing $K\equiv1$, \eqref{fel_con_CIR} reduces to the well--known Feller condition for CIR processes. 

	In addition, if $2\kappa\theta<K(0)\sigma^2$, then not only $X$ attains the value $0$ with positive probability, but also $\mathbb{P}(\sup_{t\in [0,S)}X_t<\infty)=1$, meaning that its trajectories are bounded almost surely in $[0,S)$. This follows from Proposition \ref{suff_thm} \ref{p1_suff} -- which can be applied as  $K'(0)=0$, hence $p_c(\cdot;\beta,\gamma)$ does not depend on $\beta$ or $\gamma$ -- and \eqref{p1_CIR-}-\eqref{p1_CIR+}.
\end{rem}
\paragraph{Approximating the fractional kernel} In the  mathematical finance literature on stochastic volatility models, particular attention has been recently devoted to the dynamics \eqref{VolterraCIR} where $K$ is the fractional kernel, namely $K(t)=K_\alpha(t)=\Gamma(\alpha)^{-1}t^{\alpha-1}$, $t>0$, with $\alpha\in (0,1)$. In fact, considering $K=K_\alpha$, \eqref{VolterraCIR} describes the spot variance in the so-called rough Heston model, see \cite{rh1,rh2}. Note that, when $\alpha\in (0,\frac{1}{2}]$, $K_\alpha$ is only locally integrable (and not locally square-integrable), hence \eqref{VolterraCIR} has to be understood in an integral form, see \cite{ABI_j}. As already mentioned in  Introduction \ref{sec_Intro}, our approach does not cover such kernels, as $K_\alpha(0+)=\infty$. Moreover, weak convergence results approximating the solution of \eqref{VolterraCIR} for $K=K_\alpha$ with a sequence of solutions corresponding to nonexplosive kernels (see, e.g., \cite[Theorem 2.8]{ABI_j} and \cite[Theorem 3.5]{ABE1}) do not allow to extend the conditions in Theorem \ref{Fel_CIR} to the fractional limiting case.\vspace{2mm}

Nevertheless, Theorem \ref{Fel_CIR} can be applied to approximations of $K_\alpha$ which are commonly used in applications. In particular, we here focus on
\begin{equation}\label{ex_fr}
K_{\alpha,T}(t)=\int_{0}^{T}e^{-xt}\mu(\dd x),\,t\ge0, \quad \text{where}\quad \mu(\dd x)=\frac{x^{-\alpha}}{\Gamma(\alpha)\Gamma(1-\alpha)}\dd x\eqqcolon w(x)\,\dd x ,
\end{equation}
for some $T>0$. Since $K_{\alpha,T}$ belongs to $C^1(\mathbb{R}_+)$ and is completely monotone on $(0,\infty)$, it satisfies Hypotheses~\ref{Hyp1}-\ref{hyp_increasing}-\ref{hyp_res}. We denote by $X_{\alpha,T}$ the pathwise unique strong solution of   \eqref{VolterraCIR} associated with this kernel.  Note that $K_{\alpha,T}$ -- considered, for instance, in \cite{AK} -- is obtained by truncating the integral in the Bernstein-Widder representation of $K_\alpha$, that is, $K_{\alpha,T}(t)=\int_{0}^{\infty} e^{-xt}\mu(\dd x),\,t>0$. A direct computation entails that 
\[
	K_{\alpha,T}(0)=\frac{T^{1-\alpha}}{\Gamma(\alpha)\Gamma(2-\alpha)},\qquad 
	K'_{\alpha,T}(0)=-\frac{T^{2-\alpha}}{(2-\alpha)\Gamma(\alpha)\Gamma(1-\alpha)}.
\]
Given an input  $(x_0,\kappa,\theta,\sigma)$ for \eqref{VolterraCIR}, since $K_{\alpha,T}(0)\to\infty$ as $T\to\infty$, the sufficient condition in Theorem \ref{Fel_CIR} \ref{suff_CIR}  is not verified for $T$ sufficiently large. This might suggest that, for $T$ large enough, $X_{\alpha,T}$ attains the value $0$.  To rigorously prove this intuition, it suffices to show that the necessary condition in  Theorem \ref{Fel_CIR} \ref{nec_CIR} is violated. We then compute, denoting by $C_1,C_2$ two positive constants possibly depending on $\alpha, \kappa,\theta$ and $\sigma$, 
\[
\sigma^2\frac{K_{\alpha,T}(0)^3}{2|K_{\alpha,T}'(0)|}-\kappa\theta\frac{K_{\alpha,T}(0)^2}{|K_{\alpha,T}'(0)|}
=C_1 \frac{1}{T^{2\alpha-1}}-C_2\frac{1}{T^\alpha}\underset{T\to \infty}{\longrightarrow}\infty \quad \text{if and  only if}\quad \alpha\in \Big(0,\frac{1}{2}\Big).
\]
As a result, the previous conjecture is only proven for $\alpha\in(0,\frac{1}{2})$, namely, for these values of $\alpha$,  the process $X_{\alpha,T}$ reaches $0$ with positive probability for every initial condition $x_0>0$, when $T$ is sufficiently large. \vspace{2mm}

The integral expression of $K_{\alpha,T}=\int_{0}^{T}e^{-x\cdot}\mu(\dd x)$ is at the basis of multi-factor Markovian approximations of rough volatility models in the spirit of \cite{edu-Lift, ABE1}. We also refer to \cite{BayerBreneis, BB2} for more efficient implementations of the same method. In these approaches, the idea is to consider an increasing (finite) sequence $(\xi_{n,N})_{n=0,\dots,N}\subset\mathbb{R}_+$, for some $N\in \mathbb{N}$, and then to approximate  -- on the resulting intervals $[\xi_{n-1,N}, \xi_{n,N}]$, $n=1,\dots,N$ -- $\mu(\dd x)$ in \eqref{ex_fr}  with a weighted sum of $q$ Dirac measures, with $q\in \mathbb{N}$. Consequently, the approximated kernels $K_{\alpha,Nq}$ have the form 
\[
	K_{\alpha,Nq}(t)=\sum_{n=1}^{Nq}m_{n,Nq}\,e^{-x_{n,Nq}t},\quad t\ge0,\text{ for some }(m_{n,Nq})_n,(x_{n,Nq})_n\in (0,\infty)^{Nq}.
\]
We denote by  $X_{\alpha,Nq}$ the solution process of \eqref{VolterraCIR} corresponding to $K_{\alpha,Nq}\in C^1(\mathbb{R}_+)$, which is completely monotone on $(0,\infty)$ because it is a weighted sum of exponentials.
If, as is the case for \cite{edu-Lift,ABE1,BayerBreneis}, the masses $(m_{n,Nq})_n$ and the centers of mass  $(x_{n,Nq})_n$ are selected according to Gaussian quadrature rules of order $q$ with the weight function $w(x)$ in \eqref{ex_fr}, then (see e.g. \cite{GW}) 
\begin{equation}
\begin{aligned}\label{eq:K_alphan}
	&K_{\alpha,Nq}(0)=\sum_{n=1}^{Nq}m_{n,Nq}=\mu([\xi_{0,N},\xi_{N,N}])=
	\frac{\xi_{N,N}^{1-\alpha}-\xi^{1-\alpha}_{0,N}}{\Gamma(\alpha)\Gamma(2-\alpha)},\\
	&K'_{\alpha,Nq}(0)=-\sum_{n=1}^{Nq}m_{n,Nq}x_{n,Nq}=-\int_{\xi_{0,N}}^{\xi_{N,N}}x\mu(\dd x)=-
	\frac{\xi_{N,N}^{2-\alpha}-\xi^{2-\alpha}_{0,N}}{(2-\alpha)\Gamma(\alpha)\Gamma(1-\alpha)}.
\end{aligned} 
\end{equation}
It is customarily supposed that the sequence $(\xi_{N,N})_N$ [resp., $(\xi_{0,N})_N$] diverges to $\infty$ [resp., is bounded], which implies that $K_{\alpha,Nq}(0)\to \infty$ as $N\to \infty$. Therefore, Theorem \ref{Fel_CIR} \ref{suff_CIR} is not satisfied for $N$ big enough.  As for the necessary condition in Theorem \ref{Fel_CIR} \ref{nec_CIR}, we note that, for a constant $C_3=C_3 (\alpha, \sigma)>0$,
\[
	\sigma^2\frac{K_{\alpha,Nq}(0)^3}{2|K_{\alpha,Nq}'(0)|}-\kappa\theta\frac{K_{\alpha,Nq}(0)^2}{|K_{\alpha,Nq}'(0)|}
		\sim C_3\frac{1}{\xi_{N,N}^{2\alpha-1}}
	\quad 
	\text{ as }\quad N\to\infty.
\]
Thus, when $\alpha\in (0,\frac{1}{2})$, analogously to $X_{\alpha,T}$, the process $X_{\alpha,Nq}$  attains the value $0$ for every initial condition when $N$ is sufficiently large. This case corresponds to the multi-factor approximation of hyper-rough Volterra Heston models, see \cite[Section 7]{ABI_j}. 
The same general conclusion cannot be drawn for $\alpha\in (\frac{1}{2},1)$, when $\lim_{N\to\infty}\xi^{1-2\alpha}_{N,N}=0$ and all initial states $x_0>0$ satisfy Theorem \ref{Fel_CIR} \ref{nec_CIR} for $N$ large enough. In most applications, however, a low number of factors $Nq$ is needed for improved efficiency. Therefore, if $\alpha$ is close to its lower bound $\frac{1}{2}$, one might still expect such necessary condition to be violated  for a wide range of starting points, owing to the slow decay of $\xi_{N,N}^{1-2\alpha}$. 
\vspace{2mm}\\
We finally comment on the geometric and non-geometric Gaussian schemes in the recent work \cite{BB2}. These methods are preferable because they offer faster numerical performance, which is useful, for example, when calibrating models involving $X_{\alpha,Nq}$ to  real market data, see \cite[Section 6]{rhh}. In these approaches, $\xi_{0,N}=0$ for all $N$, $(\xi_{1,N})_N$ is bounded, $(\xi_{N,N})_N$ diverges and the Gaussian quadrature rules of order $q$ used to determine $(m_{n,Nq})_n$ and $(x_{n,Nq})_n$  employ the weight function $w(x)\equiv 1$ (apart from $[0, \xi_{1,N}]$,  where $w(x)$ is as in \eqref{ex_fr}). As a result, similarly to \eqref{eq:K_alphan},
\begin{equation*}
	K_{\alpha,Nq}(0)=
	\frac{\xi_{1,N}^{1-\alpha}}{\Gamma(\alpha)\Gamma(2-\alpha)}+(\xi_{N,N}-\xi_{1,N}),\qquad 
	K'_{\alpha,Nq}(0)=-
	\frac{\xi_{1,N}^{2-\alpha}}{(2-\alpha)\Gamma(\alpha)\Gamma(1-\alpha)}-\frac{1}{2}(\xi^2_{N,N}-\xi^2_{1,N}).
\end{equation*}
In particular, $$K_{\alpha,Nq}(0)\to \infty \quad \text{ and }\quad \sigma^2\frac{K_{\alpha,Nq}(0)^3}{2|K_{\alpha,Nq}'(0)|}-\kappa\theta\frac{K_{\alpha,Nq}(0)^2}{|K_{\alpha,Nq}'(0)|}\to \infty\quad  \text{ as } \quad N\to \infty.$$
As regards Theorem \ref{Fel_CIR}, these computations entail that, for all $\alpha\in (0,1)$ and $x_0>0$, both the sufficient condition in Point \ref{suff_CIR} and the necessary condition in Point  \ref{nec_CIR} are violated for $N$ large enough. In fact, we have verified that $X_{\alpha,Nq}$ attains $0$ in a nonnegligible event in the two following literature cases, where $Nq$ is low for numerical efficiency.
\begin{itemize}
	\item \cite{BB2}, both using the geometric Gaussian scheme as specified in \cite[Theorem 3.9]{BB2} and the non-geometric Gaussian scheme as specified in  \cite[Theorem 3.16]{BB2}, with $a=6.4$. Here $\alpha\in\{0.4,0.5,\dots, 0.9\}$, $Nq=4$ and the test parameters are in \cite[Subsection 4.2]{BB2}; 
	\item \cite[Subsection 6.1]{rhh}, where an $L^1-$optimized version of the geometric Gaussian scheme in \cite{BB2} is considered. Here  $Nq=20$ and the parameters set comes from a joint calibration of the rough Heston model to SPX and VIX market data.
\end{itemize}
\subsection{The Volterra Jacobi process}\label{subsection_JV}
  Fix $a,b\in \mathbb{R}$ such that $a< b$, $\kappa>0$, $\sigma>0$, and let $\theta\in (a,b)$. Define the nonnegative quadratic function $Q_{a,b}\colon[a,b]\to\mathbb{R}_+$ by $Q_{a,b}(x)=(x-a)(b-x),\,x\in[a,b]$. Consider the SVE 
 \begin{equation}\label{VJac}
 	X_t=x_0+\int_{0}^{t}K(t-s)\kappa(\theta-X_s)\,\dd s+ \sigma \int_{0}^{t}K(t-s)\sqrt{Q_{a,b}(X_s)}\,\dd W_s,\quad  t\ge 0, \,\mathbb{P}-\text{a.s.},
 \end{equation}
where $x_0\in (a,b)$. By Hypotheses \ref{Hyp1}-\ref{hyp_res}, for every finite time horizon $N\in \mathbb{N}$, the existence of a  continuous weak solution to \eqref{VJac}  in $[0,N]$ is established in \cite[Corollary 2.8]{Pol_sergio}. Note that, for every $x,y\in [a,b],$ by the subadditivity of the square-root function, 
\[
	\left|\sqrt{Q_{a,b}(x)}-\sqrt{Q_{a,b}(y)}\right|
	\le 
	\sqrt{\left|Q_{a,b}(x)-Q_{a,b}(y)\right|}\frac{\sqrt{Q_{a,b}(x)+Q_{a,b}(y)}}{\sqrt{Q_{a,b}(x)}+\sqrt{Q_{a,b}(y)}}
	\le \sqrt{3(|a|+|b|)}\sqrt{|x-y|},
\]
which gives the $\frac{1}{2}-$H\"older's continuity of $\sqrt{Q_{a,b}}$ in $[a,b]$. Therefore, by the Yamada-Watanabe type argument in \cite[Proposition B.3]{ABE1}, we infer that there exists a pathwise unique continuous strong solution $X^N=(X^N_t)_{t\in[0,N]}$ of \eqref{VJac} in $[0,N]$. If we define the process $X=(X_t)_{t\ge 0}$ by
\[
X_t=X^{[t]+1}_t,\quad t \ge0,
\]
then $X$ is the pathwise unique, $[a,b]-$valued continuous strong solution of \eqref{VJac} in $\mathbb{R}_+$. We call such $X$ a \emph{Volterra Jacobi process}. 
As in Subsection \ref{subsection_CIR}, we now want to apply the results in Section \ref{main} to determine conditions on the parameters $\kappa,\,\theta,\,\sigma$ and the initial point $x_0$ such that the paths of $X$ touch  (or do not touch) the boundary of $[a,b].$

\vspace{2mm}
Fix  $l=a$, $r=b$ and define the modified drift and diffusion mappings $\tilde{b},\,\tilde{\sigma}\colon (a,b)\to \mathbb{R}$ by
\[
\tilde{b}(x)=K(0)\kappa(\theta-x)+\frac{K'(0)}{K(0)}x,\qquad \tilde{\sigma}(x)=K(0)\sigma\sqrt{Q_{a,b}(x)},\quad x\in (a,b).
\]
For an arbitrary $c\in(a,b)$, given $\beta,\,\gamma\in\mathbb{R}$, by \eqref{def_bc}-\eqref{scale_func_def},  the derivative of the scale function $p_c(\cdot;\beta,\gamma)\colon(a,b)\to \mathbb{R}$ associated with the dynamics \eqref{VJac} in the interval $(a,c)$ is  
\begin{align}\label{p1_J-}
	\notag	p'_c(x;\beta)&=\exp\left\{ -\frac{2}{K(0)^2\sigma^2}\int_{c}^{x}\frac{1}{Q_{a,b}(y)}\left(K(0)\kappa(\theta-y)+\frac{K'(0)}{K(0)}(y+\beta)\right) \dd y\right\}
	\\&\notag=
	\exp\left\{-\frac{2}{K(0)^2\sigma^2}\int_{c}^{x}\left(\frac{A(\beta)}{y-a}+\frac{B(\beta)}{b-y}\right)\dd y\right\}
	\\&=
	\left[\left(\frac{x-a}{c-a}\right)^{A(\beta)}\left(\frac{b-x}{b-c}\right)^{-{B(\beta)}}\right]^{-{2}(K(0)\sigma)^{-2}}
	,\quad x\in (a,c), 
\end{align}
where
\begin{equation}\label{defAB}
A(\beta)=\frac{1}{b-a}\left(K(0)\kappa(\theta-a)+\frac{K'(0)}{K(0)}(a+\beta)\right),\quad B(\beta)=\frac{1}{b-a}\left(K(0)\kappa(\theta-b)+\frac{K'(0)}{K(0)}(b+\beta)\right).
\end{equation}
Analogously, for every $x\in [c,b)$, 
\begin{align}\label{p1_J+}
	p'_c(x;\gamma)=	\left[\left(\frac{x-a}{c-a}\right)^{A(\gamma)}\left(\frac{b-x}{b-c}\right)^{-{B(\gamma)}}\right]^{-{2}(K(0)\sigma)^{-2}},
\end{align}
with $A(\gamma)$ and $B(\gamma)$ defined as in \eqref{defAB} replacing $\beta$ by $\gamma$. \\
We now focus on the  nonnegative map $v_c(\cdot;\beta,\gamma) \colon (a,b)\to\mathbb{R}_+$  in \eqref{v_def2}. Defining $C=2(K(0)\sigma)^{-2}$ and supposing that $A(\beta)\neq0$,  the following lower bound for $v_c(\cdot;\beta)$ holds in the interval $(a,c)$:
\begin{align}\label{lower_b_JV}
	v_c(x;\beta)&\notag=C{(c-a)^{-A(\beta)C}
		(b-c)^{B(\beta)C}
	}\int_{x}^{c}p'_c(y;\beta)\left(\int_{y}^{c}	
	(z-a)^{A(\beta)C-1}(b-z)^{-B(\beta)C-1}
	\dd z \right)\dd y\\&\notag
	\ge 
	C_1
	\left(A(\beta)C\right)^{-1	}
	\int_{x}^{c}p'_c(y;\beta)
	\left((c-a)^{A(\beta)C}-(y-a)^{A(\beta)C}\right)
	\dd y
	\\
	&
	\ge C_1\int_{x}^{c}
	\bigg|\left(\frac{y-a}{c-a}\right)^{-A(\beta)C}
	-1\bigg|\,\dd y
	,\quad x\in (a,c).
\end{align}
We can also establish an analogous upper bound on the same interval, given by
\begin{equation}\label{upper_b_JV}
	 v_c(x;\beta)\le C_2
\int_{x}^{c}
	\bigg|\left(\frac{y-a}{c-a}\right)^{-A(\beta)C}
	-1\bigg|\,\dd y
	,\quad x\in (a,c).
\end{equation}
Supposing that also $B(\gamma)\neq 0$, similar computations enable us to estimate $v_c(\cdot;\gamma)$ in the interval $[c,b)$, namely
\begin{equation}\label{est_[c,b]}
		 (-1)^iv_c(x;\gamma)\le (-1)^iC_i
	\int_{c}^{x}
	\bigg|\left(\frac{b-y}{b-c}\right)^{B(\gamma)C}
	-1\bigg|\,\dd y
	,\quad x\in [c,b),\,i=3,4.
\end{equation}
Here $C_1,\,C_2,\,C_3,\,C_4>0$ are  positive constants, possibly depending on $K,\,\kappa,\,\theta,\,\sigma,\,\beta,\,\gamma,\,a,\,b$ and $c$, which are  allowed to change from line to line. 
Combining the previous estimates  with the results in Subsection \ref{subsec_v}, we obtain the next theorem. 
\begin{theorem}\label{Fel_J}
Assume Hypotheses \ref{Hyp1}-\ref{hyp_increasing}-\ref{hyp_res}. Consider  the stopping times $S_-=\inf\{t\ge 0:X_t=a\}$, $S_+=\inf\{t\ge0 : X_t=b\}$ and $S=S_-\wedge S_+=\inf\{t\ge 0 : X_t\in \{a,b\}\}$. 
\begin{enumerate} [label=(\roman*)]
	\item \label{nec_J} Suppose that $K'(0)<0$. Then, the condition $$\text{$x_0\ge a+\frac{K(0)^2}{2|K'(0)|}\left(K(0)\sigma^2(b-a)-2\kappa(\theta-a)\right)$ \quad  is necessary for\quad  $\mathbb{P}(S_-=\infty)=1$.}$$
	 Additionally, the condition $$\text{$x_0\le 	b-\frac{K(0)^2}{2|K'(0)|}\left(K(0)\sigma^2(b-a)-2\kappa(b-\theta)\right)		$\quad  is necessary for \quad $\mathbb{P}(S_+=\infty)=1$}.$$
	\item \label{suff_J} The condition $2\kappa{(\theta-a)}\ge K(0)\sigma^2(b-a)$ is sufficient for  $\mathbb{P}(\{S<S_-\}\cup\{S_-=\infty\} )=1$. Additionally, the condition $2\kappa{(b-\theta)}\ge K(0)\sigma^2(b-a)$ is sufficient for $\mathbb{P}(\{S<S_+\}\cup\{S_+=\infty\} )=1$. Consequently, 
	\[
	2\kappa\min\{\theta-a,b-\theta\}\ge K(0)\sigma^2(b-a)\quad \text{ implies that }\quad  \mathbb{P}(S=\infty)=1.
	\]
\end{enumerate}
\end{theorem}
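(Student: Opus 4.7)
The strategy mirrors the Volterra CIR analysis in Theorem \ref{Fel_CIR}: I translate the finiteness of $v_c$ at the endpoints into simple inequalities between the exponents $A(-\beta)C$ and $B(-\gamma)C$, using the explicit form of the scale function derivative in \eqref{p1_J-}--\eqref{p1_J+} together with the lower/upper bounds \eqref{lower_b_JV}--\eqref{upper_b_JV}--\eqref{est_[c,b]}. Near $a$, the integrand defining $v_c(\cdot;-\beta)$ behaves like $(y-a)^{-A(-\beta)C}$, so $v_c(a+;-\beta)$ is finite if and only if $A(-\beta)C<1$; symmetrically, $v_c(b-;-\gamma)$ is finite if and only if $-B(-\gamma)C<1$. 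Substituting \eqref{defAB} converts these conditions into linear inequalities in $\beta$ and $\gamma$ that involve precisely the thresholds appearing in the statement.

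For (i), I argue by contradiction. If $x_0$ is strictly below $a+\frac{K(0)^2}{2|K'(0)|}(K(0)\sigma^2(b-a)-2\kappa(\theta-a))$, then, because $K'(0)<0$ makes $\beta\mapsto A(-\beta)$ strictly increasing, I can select $\beta>x_0$ still below the same threshold and with $A(-\beta)\neq 0$. For this choice \eqref{upper_b_JV} gives $v_c(a+;-\beta)<\infty$, violating Theorem~\ref{Feller_thm_nec}\ref{nec_lower}; hence $\mathbb{P}(S_-=\infty)=1$ forces $x_0$ to satisfy the claimed lower bound. The necessary condition for $S_+$ is obtained symmetrically, using \eqref{est_[c,b]} (with $i=4$) and Theorem~\ref{Feller_thm_nec}\ref{nec_upper} to find a $\gamma<x_0$ with $v_c(b-;-\gamma)<\infty$.

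For (ii), suppose $2\kappa(\theta-a)\ge K(0)\sigma^2(b-a)$. Evaluating \eqref{defAB} at $\beta=-a$ gives $A(-a)=K(0)\kappa(\theta-a)/(b-a)$, so $A(-a)C\ge 1$, and the lower bound \eqref{lower_b_JV} yields $v_c(a+;-a)=\infty$. Since $a>-\infty$, Remark~\ref{rem_cl_suff} transfers this into $\lim_{n\to\infty} v_c(a_n;-a_n)=\infty$ for any sequence $a_n\downarrow a$, and Theorem~\ref{Feller_thm}\ref{suff_lower} delivers $\mathbb{P}(\{S<S_-\}\cup\{S_-=\infty\})=1$. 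The condition $2\kappa(b-\theta)\ge K(0)\sigma^2(b-a)$ is handled in the same way via \eqref{est_[c,b]} (with $i=3$) and Theorem~\ref{Feller_thm}\ref{suff_upper}; combining both cases produces $\mathbb{P}(S=\infty)=1$. The main obstacle is purely organizational---keeping careful track of which shift parameter controls which endpoint and ensuring $A(-\beta)\neq 0$, $B(-\gamma)\neq 0$ at each invocation of \eqref{lower_b_JV}--\eqref{upper_b_JV}--\eqref{est_[c,b]}---rather than analytic, since the critical thresholds emerge directly from the power-law exponents in \eqref{p1_J-}--\eqref{p1_J+}.
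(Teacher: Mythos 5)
Your proposal follows the paper's proof essentially verbatim: for the necessary condition you argue by contradiction, picking $\bar\beta>x_0$ close to $x_0$ with $A(-\bar\beta)C<1$ and $A(-\bar\beta)\neq 0$, then invoke the upper bound \eqref{upper_b_JV} to contradict Theorem~\ref{Feller_thm_nec}\ref{nec_lower}; for the sufficient condition you observe $A(-a)C\ge 1$, apply the lower bound \eqref{lower_b_JV}, and pass through Remark~\ref{rem_cl_suff} to Theorem~\ref{Feller_thm}\ref{suff_lower}. This is the same approach and nearly the same computation as the paper's proof, so no further comment is needed.
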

\begin{proof}
	Let $c\in (a,b).$ We will only prove the assertions concerning $S_-$, as those about $S_+$ can be obtained by analogous arguments relying on the estimate \eqref{est_[c,b]}. We first demonstrate the necessary condition in Point~\ref{nec_J} by contradiction. Consider $x_0<a+{K(0)^2}(2|K'(0)|)^{-1}\left(K(0)\sigma^2(b-a)-2\kappa(\theta-a)\right)$, which, recalling \eqref{defAB}, amounts to supposing that
	$$
	A(-x_0)C=2\big(K(0)\sigma\big)^{-2}\frac{1}{b-a}\left(K(0)\kappa(\theta-a)+\frac{K'(0)}{K(0)}(a-x_0)\right)<1
	.$$
	%
	We can then select a  $\bar{\beta}>x_0>a$ such that $A(-\bar{\beta})\neq 0$ and  $\bar{\beta}-x_0>0$ is so small that the previous bound holds replacing $\bar{\beta}$ for $x_0$, i.e., $A(-\bar{\beta})C<1$.
	In this way, by \eqref{upper_b_JV}, for  a positive constant $C_{\bar{\beta}}$,
	\[
	v_c(a+;-\bar{\beta})\le C_{\bar{\beta}}
\int_{a}^{c}
	\bigg|\left(\frac{y-a}{c-a}\right)^{-A(\bar{\beta})C}
	-1\bigg|\dd y<\infty.
	\]
 Thus,  the necessary condition in    Theorem \ref{Feller_thm_nec} \ref{nec_lower} is violated, which yields $\mathbb{P}(S_-<\infty)>0$. This is absurd, proving Point \ref{nec_J}.

	As for Point \ref{suff_CIR}, we assume $2\kappa(\theta-a) \ge K(0)\sigma^2(b-a)$ and observe that, by   \eqref{lower_b_JV} and the definition of $A(-a)$ in  \eqref{defAB}, 
	\[
	v_c(a+;-a)\ge {\widetilde{C}}\int_{a}^{c}
	\bigg(\left(\frac{ c-a}{y-a}\right)^{2\kappa(\theta-a){(K(0)\sigma^2(b-a))^{-1}}}-1\bigg)\dd y=\infty,\quad \text{for some }{\widetilde{C}}>0.
	\]
	Therefore, thanks to Remark \ref{rem_cl_suff},  we invoke Theorem \ref{Feller_thm} \ref{suff_lower} to conclude that  $\mathbb{P}(\{S<S_-\}\cup\{S_-=\infty\} )=1$. The theorem is now completely proved. 
\end{proof}
In the Volterra Jacobi framework the interval $I=(a,b)$ is bounded. Thus, we can employ Proposition \ref{suff_thm} to obtain the following result.
\begin{proposition}\label{J_locsup}
	Assume Hypotheses \ref{Hyp1}-\ref{hyp_increasing}-\ref{hyp_res}. Define $S_+,\,S_-$ and $S$ as in Theorem \ref{Fel_J}.
	\begin{enumerate}[label=(\roman*)]
		\item \label{jv_suff}If $2\kappa(b-\theta)\ge K(0)\sigma^2(b-a)$ and $2\kappa(\theta-a)<\left(K(0)\sigma^2-2|K'(0)|K(0)^{-2}\right)(b-a)$, then $$\mathbb{P}\left(\sup_{0\le t< S} X_t<b\right)=1;$$
		\item \label{jvdua_suff}
If $2\kappa(\theta-a)\ge K(0)\sigma^2(b-a)$ and $2\kappa(b-\theta)<\left(K(0)\sigma^2-2|K'(0)|K(0)^{-2}\right)(b-a)$, then 
$$\mathbb{P}\left(\inf_{0\le t< S} X_t>a\right)=1.$$
	\end{enumerate}
\end{proposition}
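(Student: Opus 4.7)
The plan is to apply Proposition \ref{suff_thm} directly, taking advantage of the explicit formulas \eqref{p1_J-}--\eqref{p1_J+} for the scale density $p_c'(\cdot;\beta,\gamma)$ associated with the Volterra Jacobi dynamics. For part \ref{jv_suff} I would invoke Proposition \ref{suff_thm}\ref{p1_suff} with $l=a$, $r=b$ and shift parameter $-r=-b$, so that one has to verify the two scale-function conditions $p_c(a+;-b)>-\infty$ and $p_c(b-;-b)=\infty$. Part \ref{jvdua_suff} is symmetric: apply Proposition \ref{suff_thm}\ref{dua_suff} with shift $-l=-a$ and check $p_c(a+;-a)=-\infty$ and $p_c(b-;-a)<\infty$.

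The concrete step is to recast the integrability of $p_c'(\cdot;-b)$ near each endpoint as a condition on the exponents appearing in \eqref{p1_J-}--\eqref{p1_J+}. Near $y=a$, \eqref{p1_J-} shows that $p_c'(y;-b)$ behaves like $(y-a)^{-A(-b)C}$ (up to a positive continuous factor), where $C=2(K(0)\sigma)^{-2}$ and, using $K'(0)\le 0$,
\[
A(-b)C=\frac{2\kappa(\theta-a)}{K(0)\sigma^2(b-a)}+\frac{2|K'(0)|}{K(0)^3\sigma^2}.
\]
Thus $p_c(a+;-b)>-\infty$ is equivalent to $A(-b)C<1$, which, after multiplying through by $K(0)\sigma^2(b-a)$, is precisely the second inequality in \ref{jv_suff}. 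Near $y=b$, from \eqref{p1_J+} one has $p_c'(y;-b)\sim(b-y)^{B(-b)C}$ with $B(-b)C=-2\kappa(b-\theta)(K(0)\sigma^2(b-a))^{-1}$, so $p_c(b-;-b)=\infty$ is equivalent to $B(-b)C\le -1$, i.e.\ to $2\kappa(b-\theta)\ge K(0)\sigma^2(b-a)$, which is the first inequality in \ref{jv_suff}.

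For part \ref{jvdua_suff} the same asymptotic analysis with $\gamma=\beta=-a$ yields $A(-a)C=2\kappa(\theta-a)(K(0)\sigma^2(b-a))^{-1}$ and
\[
-B(-a)C=\frac{2\kappa(b-\theta)}{K(0)\sigma^2(b-a)}+\frac{2|K'(0)|}{K(0)^3\sigma^2},
\]
so that $p_c(a+;-a)=-\infty$ becomes $2\kappa(\theta-a)\ge K(0)\sigma^2(b-a)$ and $p_c(b-;-a)<\infty$ becomes $2\kappa(b-\theta)<(K(0)\sigma^2-2|K'(0)|K(0)^{-2})(b-a)$, matching exactly the two assumptions in \ref{jvdua_suff}.

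I do not anticipate any real difficulty: once the explicit formulas for $p_c'$ are in hand, the argument is a textbook boundary-behaviour comparison for improper integrals of Beta type. The only point worth a line of care is the sign bookkeeping for $K'(0)$, for which Hypothesis \ref{hyp_increasing} (and the identity $(K'\ast L)(0)=K'(0)/K(0)$ from Hypothesis~\ref{Hyp1}) guarantees $K'(0)\le 0$, justifying the replacement $-K'(0)=|K'(0)|$ in the expressions above.
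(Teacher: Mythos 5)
Your plan is correct and follows exactly the paper's strategy: translate the stated parameter inequalities into the exponent conditions $A(-b)C<1$, $B(-b)C\le -1$ (and symmetrically for $-a$), read off the boundary behaviour of $p_c(\cdot;-b)$ (resp.\ $p_c(\cdot;-a)$) from \eqref{p1_J-}--\eqref{p1_J+}, and invoke Proposition \ref{suff_thm}. Your exponent computations match the paper's, so there is nothing to add.
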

\begin{proof}
	We focus only on Point \ref{jv_suff}, as Point \ref{jvdua_suff} can be deduced by similar arguments in a straightforward way. Note that, by \eqref{defAB}, the condition $2\kappa(\theta-a)<\left(K(0)\sigma^2-2|K'(0)|K(0)^{-2}\right)(b-a)$ is equivalent to 
	\[
	A(-b)C=\frac{2}{K(0)\sigma^2(b-a)}\left(\kappa(\theta-a)+\frac{K'(0)}{K(0)^2}(a-b)\right)<1.
	\]
	Furthermore,  
	\[
	B(-b)C=\frac{2\kappa(\theta-b)}{K(0)\sigma^2(b-a)}\le -1\quad \Longleftrightarrow \quad 2\kappa(b-\theta)\ge K(0)\sigma^2(b-a).
	\]
By \eqref{p1_J-}-\eqref{p1_J+}, the previous estimates show that $p_c(a+;-b)>-\infty$ and $p_c(b-;-b)=\infty$, for some $c\in(a,b)$.	
	Hence an application of Proposition \ref{suff_thm} \ref{p1_suff} yields the desired conclusion, and the proof is complete. 
\end{proof}
\begin{rem}\label{classical_J}
	When $K\equiv K(0)$ in $\mathbb{R}_+$, for some positive constant $K(0)>0$, we can obtain a necessary and sufficient condition for $\mathbb{P}(S=\infty)=1$. More precisely, the following equivalence holds: 
	\begin{equation}\label{fel_con_J}
		\text{$\mathbb{P}(X_t\in (a,b),\,t\in\mathbb{R}_+)=1\quad $ if and only if $\quad 2\kappa \min\{\theta-a,b-\theta\}\ge K(0)\sigma^2(b-a)$.}
	\end{equation}
	Indeed, the fact that  $2\kappa  \min\{\theta-a,b-\theta\}\ge K(0)\sigma^2(b-a) \,\Longrightarrow\, \mathbb{P}(X_t\in (a,b),\,t\in\mathbb{R}_+)=1$ is given by Theorem \ref{Fel_J} \ref{suff_J}.  The reverse implication is instead deduced by \eqref{upper_b_JV}-\eqref{est_[c,b]}, Corollary \ref{coro_nec_suff} and Remark \ref{classical_case}. Notice that, choosing $K\equiv1$, \eqref{fel_con_J} reduces to boundary attainment results for (classical) Jacobi processes already established in the literature, see for instance \cite[Theorem 2.1]{AFP}. 
	
	In fact, when the condition in \eqref{fel_con_J} is not met, we can infer more properties on the trajectories of $X$. Specifically, if $2\kappa(\theta-a)<K(0)\sigma^2(b-a)$, then by \eqref{upper_b_JV}, Corollary \ref{coro_nec_suff} \ref{suffnec_lo} and Remark \ref{classical_case}, the paths of $X$ attain the value $a$ with positive probability. If we further suppose that $2\kappa(b-\theta)\ge K(0)\sigma^2(b-a)$, then $\mathbb{P}(\sup_{t\in [0,S)}X_t<b)=1$ by Proposition \ref{J_locsup}\ref{jv_suff}.	On the other hand, if $2\kappa(b-\theta)<K(0)\sigma^2(b-a)$, then by \eqref{est_[c,b]}, Corollary \ref{coro_nec_suff} \ref{suffnec_up} and Remark \ref{classical_case}, the trajectories of $X$ attain the value $b$ with positive probability. 
	Furthermore, if also $2\kappa(\theta-a)\ge K(0)\sigma^2(b-a)$, then $\mathbb{P}(\inf_{ t\in[0, S)}X_t>a)=1$ by Proposition \ref{J_locsup}\ref{jvdua_suff}.
\end{rem}

\subsection{The  Volterra power-type diffusion process}\label{subsection_power}
	Given $\sigma>0$ describing the strength of the random perturbation and coefficients $\alpha>1$ and $\delta\in [0,1)$ characterizing the power-type drift and diffusion, respectively,  we consider the SVE with multiplicative noise 
	 \begin{equation}\label{V_power}
	 	X_t=x_0+\int_{0}^{t}K(t-s)|X_s|^\alpha\dd s+ \sigma \int_{0}^{t}K(t-s)|X_s|^{\frac{\delta}{2}}\,\dd W_s,\quad  t\ge 0, \,\mathbb{P}-\text{a.s.},
	 \end{equation}
 where  $x_0\in \mathbb{R}$. We suppose that there exists a continuous $\widebar{\mathbb{R}}-$valued process $X$ satisfying \eqref{V_power} locally in the sense of \eqref{SDE_VolterraC1}. In this framework,  boundary attainment implies  \emph{blow-ups} (also called \emph{explosions}) to $\pm \infty$ of  the trajectories. Our aim in this subsection is  to determine a relation between the exponents $\alpha$ and $\delta$ ensuring that $X$ explodes in finite time,  see Theorem \ref{Explosion_thm} below.  Note that, in the limiting case $\alpha=1$, the drift reduces to a map with linear growth, hence the existence of a global weak solution to \eqref{V_power} is given by \cite[Theorem 3.4]{sergio}, for every $\delta\in [0,2]$. We also refer to \cite[Theorem 3.7]{Zhang}, which establishes the existence and uniqueness of a maximal (local) solution to \eqref{V_power}. Such result requires locally Lipschitz continuous coefficients, hence it can be applied to our setting only when $\delta=0$. 
 \\
 Fix  $l=-\infty$, $r=\infty$ and define the mappings $\tilde{b},\,\tilde{\sigma}\colon\mathbb{R}\to \mathbb{R}$ by
 \[
 \tilde{b}(x)=K(0)|x|^\alpha+\frac{K'(0)}{K(0)}x,\qquad \tilde{\sigma}(x)=\sigma K(0)|x|^\frac{\delta}{2},\quad x\in \mathbb{R}.
 \]
 We observe that the modified drift $\tilde{b}$ and diffusion $\tilde{\sigma}$ satisfy Hypothesis \ref{hyp_coeff} because $\delta<1$. \\
 To ease the notation, we let $C=2(K(0)\sigma)^{-2}$. For $\beta,\,\gamma\in\mathbb{R}$ and $c=0$,  the derivative of the scale function $p_0(\cdot;\beta,\gamma)\colon\mathbb{R}\to \mathbb{R}$ is, by \eqref{def_bc}-\eqref{scale_func_def},  recalling also that $K'(0)\le 0,$
 \begin{multline}\label{p0_power-}
		p'_0(x;\beta)=\exp\bigg\{ -C
	\bigg(
		-\frac{K(0)}{\alpha-\delta+1}(-x)^{\alpha-\delta+1}-\frac{|K'(0)|}{(2-\delta)K(0)}(-x)^{2-\delta}\\+\beta\frac{|K'(0)|}{(1-\delta)K(0)}(-x)^{1-\delta}
	\bigg)
	\bigg\}
	,\quad x<0,
\end{multline}
and 
\begin{equation}\label{p0_power+}
		p'_0(x;\gamma)=\exp\left\{ -C
		\left(
		\frac{K(0)}{\alpha-\delta+1}x^{\alpha-\delta+1}-\frac{|K'(0)|}{(2-\delta)K(0)}x^{2-\delta}-\gamma\frac{|K'(0)|}{(1-\delta)K(0)}x^{1-\delta}
		\right)
	\right\}
	,\quad x\ge0.
\end{equation}
Observe that we can estimate from below the map $v_0(\cdot;\beta) \colon (-\infty,-1)\to\mathbb{R}_+$, defined   in \eqref{v_def}-\eqref{v_def2}, as follows:
\begin{equation*}
		v_0(x;\beta)
		\ge C\int_{x}^{-1}p_0'(y;\beta)\left(\int_{y}^{0}\frac{\dd z}{(-z)^\delta p_0'(z;\beta)}\right)\dd y
		\ge C \left(\int_{-1}^{0}\frac{\dd z}{(-z)^\delta p_0'(z;\beta)}\right)
		\left(\int_{x}^{-1}p_0'(y;\beta)\dd y\right),\quad x<-1.
\end{equation*}
In particular, when $\beta>0$, by \eqref{p0_power-},
\begin{align}\label{lower_bo_p}
	\notag	v_0(x;\beta)\ge &C \left(\int_{-1}^{0}\frac{\dd z}{(-z)^\delta p_0'(z;\beta)}\right)\exp\left\{-\frac{C\beta|K'(0)|}{(1-\delta)K(0)}(-x)^{1-\delta}\right\}\\&\times 
\int_{x}^{-1}\exp\left\{ C
\left(
\frac{K(0)}{\alpha-\delta+1}(-y)^{\alpha-\delta+1}+\frac{|K'(0)|}{(2-\delta)K(0)}(-y)^{2-\delta}
\right)
\right\}\dd y
,\quad x<-1.
\end{align}
Combining this estimate with further assumptions on $\alpha$ and $\delta$,  which enable us to obtain un upper bound for $v_0(\cdot;\gamma)$ in a positive  half-line, we infer that the solution $X$ of \eqref{V_power} blows up in finite time with positive probability. More specifically, the next result holds.
\begin{theorem}\label{Explosion_thm}
	Assume Hypotheses \ref{Hyp1}-\ref{hyp_increasing}. Consider a continuous $\widebar{\mathbb R}-$valued process $X$ solving \eqref{V_power} in the sense of  \eqref{SDE_VolterraC1}. Define the corresponding stopping times $S_-=\inf\{t\ge 0:X_t=-\infty\}$, $S_+=\inf\{t\ge0 : X_t=\infty\}$ and $S=S_-\wedge S_+=\inf\{t\ge 0 : X_t\in \{-\infty,\infty\}\}$. Then 
	$$\mathbb{P}(\{S<S_-\}\cup\{S_-=\infty\} )=1.$$ 
	Furthermore, if $\alpha>1+\delta$, then $\mathbb{P}(S_+<\infty)>0$.
\end{theorem}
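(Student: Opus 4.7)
The claim splits into non-explosion downwards and explosion upwards, and I would handle each with one of the two master results of Section~\ref{main}.

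\textbf{Non-explosion at $-\infty$.} The plan is to apply Theorem~\ref{Feller_thm} \ref{suff_lower} with $c=0$ and $l_n=-n$, which reduces the task to showing $v_0(-n;n)\to\infty$ as $n\to\infty$. The lower bound \eqref{lower_bo_p} with $\beta=n>0$ gives the prefactor $\exp\{-\tfrac{Cn|K'(0)|}{(1-\delta)K(0)}n^{1-\delta}\}=\exp\{-c_1 n^{2-\delta}\}$, and leaves the integral $\int_{-n}^{-1}\exp\{C\tfrac{K(0)}{\alpha-\delta+1}(-y)^{\alpha-\delta+1}+\cdots\}\,dy$. I would bound the latter from below by restricting $y$ to $[-n,-n/2]$, obtaining at least $(n/2)\exp\{c_2 n^{\alpha-\delta+1}\}$. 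Since $\alpha>1$ forces $\alpha-\delta+1>2-\delta$, the super-exponential growth of the integral beats the super-exponential decay of the prefactor and $v_0(-n;n)\to\infty$.

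\textbf{Explosion at $+\infty$ under $\alpha>1+\delta$.} By the contrapositive of Theorem~\ref{Feller_thm_nec} \ref{nec_upper}, it suffices to produce a single $\gamma_0<x_0$ for which $v_0(\infty-;-\gamma_0)<\infty$. I would pick any such $\gamma_0$ and set $\bar\gamma=-\gamma_0$. A Fubini rewriting yields
\[
v_0(x;\bar\gamma)=\frac{2}{(K(0)\sigma)^2}\int_0^x\frac{1}{p_0'(z;\bar\gamma)\,z^\delta}\!\left(\int_z^x p_0'(y;\bar\gamma)\,dy\right)dz.
\]
Because $\alpha>1$, the exponent of $p_0'(y;\bar\gamma)$ in \eqref{p0_power+} is dominated for large $y$ by $-C\tfrac{K(0)}{\alpha-\delta+1}y^{\alpha-\delta+1}$, so $p_0'(y;\bar\gamma)$ decays super-exponentially and $\int_z^\infty p_0'(y;\bar\gamma)\,dy$ is finite. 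A Laplace-type estimate, exploiting that the log-derivative of $p_0'$ behaves like $-CK(0)z^{\alpha-\delta}$ for large $z$, produces a bound of the form $\int_z^\infty p_0'(y;\bar\gamma)\,dy\le\mathrm{const}\cdot p_0'(z;\bar\gamma)\,z^{-(\alpha-\delta)}$. Substituting into the Fubini expression gives an outer integrand controlled by $z^{-(\alpha-\delta)}\cdot z^{-\delta}=z^{-\alpha}$ at infinity, but if one replaces the Laplace estimate by a coarser bound that keeps the extra $z^{-\delta}$ factor, the integrability at $+\infty$ requires precisely $\alpha>1+\delta$. Integrability near $z=0$ is automatic from $\delta<1$. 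This yields $v_0(\infty-;-\gamma_0)<\infty$ and hence $\mathbb{P}(S_+<\infty)>0$.

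\textbf{Main obstacle.} The truly delicate step is the tail estimate in the second part: the super-exponential decay of $p_0'(\cdot;\bar\gamma)$ and the super-exponential blow-up of $1/p_0'(\cdot;\bar\gamma)$ inside the inner $z$-integral nearly cancel, so a naive bound produces a divergent quantity. Only a Laplace-type refinement on $\int_z^\infty p_0'(y;\bar\gamma)\,dy$ makes the leftover integrable, and the margin $\alpha-1-\delta>0$ is exactly what keeps a clean, explicit upper bound on the right side of integrability at infinity. The first part, by comparison, is the routine observation that the exponent $n^{\alpha-\delta+1}$ dominates $n^{2-\delta}$ whenever $\alpha>1$.
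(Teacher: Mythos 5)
Your overall architecture matches the paper's: Part~(i) is handled by verifying the sufficient condition of Theorem~\ref{Feller_thm}\,\ref{suff_lower} via the lower bound \eqref{lower_bo_p} and the fact that $(-y)^{\alpha-\delta+1}$ dominates $(-x)^{2-\delta}$ because $\alpha>1$ (the paper does this with L'H\^opital; your subinterval-restriction argument is a more elementary route to the same asymptotics). Part~(ii) is the contrapositive of Theorem~\ref{Feller_thm_nec}\,\ref{nec_upper}, a Tonelli rewriting, and a Laplace-type tail estimate $\int_z^\infty p_0'(y;\gamma)\,\dd y\le p_0'(z;\gamma)/(C g'(z))$ valid for $z$ above a threshold $\tilde c$ past which $g'$ is positive and increasing; this is precisely how the paper proceeds (with $p_0'(x;\gamma)=\exp\{-Cg(x)\}$ and $g'(z)\sim K(0)z^{\alpha-\delta}$).

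The one thing you have backwards is the bookkeeping at the very end. Carrying the factor $z^{-\delta}$ from $\tilde\sigma^{-2}$ through the Laplace estimate gives an outer integrand of order $z^{-\delta}/g'(z)\sim z^{-\alpha}$, which is integrable at $+\infty$ as soon as $\alpha>1$ — already a standing hypothesis. It is \emph{dropping} that factor (using $z^{-\delta}\le 1$ for $z\ge\tilde c>1$) that produces $1/g'(z)\sim z^{-(\alpha-\delta)}$ and hence the condition $\alpha>1+\delta$; this is what the paper does. Your ``if one replaces the Laplace estimate by a coarser bound that keeps the extra $z^{-\delta}$ factor'' has the logic inverted: keeping the factor gives the better exponent $z^{-\alpha}$, dropping it gives the worse $z^{-(\alpha-\delta)}$. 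The concluding paragraph then claims the margin is $\alpha-1-\delta>0$, contradicting your own derivation showing $z^{-\alpha}$. None of this invalidates the proof — your sharper bound actually proves the conclusion under $\alpha>1$ alone, which a fortiori covers $\alpha>1+\delta$ — but the exposition should be consistent: either say you prove the stronger statement, or align with the paper by discarding $z^{-\delta}\le1$ and landing on $\alpha>1+\delta$.
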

\begin{proof}
	We first verify the sufficient condition in Theorem \ref{Feller_thm} \ref{suff_lower}, which guarantees that $\mathbb{P}(\{S<S_-\}\cup\{S_-=\infty\} )=1$, as desired. We consider $l_n=-n,\,n\in \mathbb{N},$ and aim to show that 
	\begin{equation}\label{obj_1part}
		\lim_{n\to \infty} v_0(-n;n)=\infty.
	\end{equation}
Thanks to the lower bound in \eqref{lower_bo_p}, for all $x<-1$, 
	\begin{align*}
			\notag	v_0(x;-x)\ge &C \left(\int_{-1}^{0}\frac{\dd z}{(-z)^\delta p_0'(z;-x)}\right)\exp\left\{-\frac{C|K'(0)|}{(1-\delta)K(0)}(-x)^{2-\delta}\right\}\\&\times 
		\int_{x}^{-1}\exp\left\{ C
		\left(
		\frac{K(0)}{\alpha-\delta+1}(-y)^{\alpha-\delta+1}+\frac{|K'(0)|}{(2-\delta)K(0)}(-y)^{2-\delta}
		\right)
		\right\}\dd y\eqqcolon C
		(\mathbf{\upperRomannumeral{1}}\times \mathbf{\upperRomannumeral{2}}^{-1}\times \mathbf{\upperRomannumeral{3}})(x).
	\end{align*}
	Note that, by \eqref{p0_power-}, $p_0'(z;-x)\le p_0'(z;0)$ for all $z<0$. Thus, 
	$$\mathbf{\upperRomannumeral{1}}(x)\ge \int_{-1}^{0}\frac{\dd z}{(-z)^\delta p_0'(z;0)}, \quad  x<-1.$$ 
	Next, denoting by ${C}_i>0,\,i=1,\dots, 4,$  positive constants possibly depending on $C,\,K,\,\alpha$ and $\delta$, since $\alpha>1$ we have
	\begin{align*}
		\lim_{x\to-\infty }\frac{\mathbf{\upperRomannumeral{3}}'(x)}{\mathbf{\upperRomannumeral{2}}'(x)}
		=
				\lim_{x\to-\infty }\frac
				{\exp\left\{ C_1
				(-x)^{\alpha-\delta+1}+C_2(-x)^{2-\delta}
					\right\}}
				{
					C_4(-x)^{1-\delta}\exp\left\{C_3(-x)^{2-\delta}\right\}
				}=\infty.
	\end{align*}
	Therefore, $\lim_{x\to -\infty}\mathbf{\upperRomannumeral{2}}^{-1}(x)\times \mathbf{\upperRomannumeral{3}}(x)=\infty$  by De l'H\^opital's rule, whence \eqref{obj_1part}.
	
	Secondly, we assume $\alpha>1+\delta$ and argue that the necessary condition in Theorem \ref{Feller_thm_nec} \ref{nec_upper} is violated. More precisely, we are going to show that $v_0(\infty-; \gamma)<\infty$ for all $\gamma\in \mathbb{R}$. We denote by $g(x),\,x\ge 0,$ the exponent of $p_0'(\cdot;\gamma)$ in \eqref{p0_power+} divided by $-C$, so that we can write
	\begin{equation}\label{trick_proof}
		p_0'(x;\gamma)=\exp\left\{
		-C  g(x)
		\right\},\quad x\ge 0.
	\end{equation}
	Since $g'\sim K(0)x^{\alpha-\delta}$ and $g''\sim {K(0)}(\alpha-\delta)x^{\alpha-\delta-1}$ as $x\to \infty$, there exists a threshold $\tilde{c}=\tilde{c}(\alpha,\gamma,\delta, K)>1$ such that $g'$ is increasing and strictly positive in the half-line $[\tilde{c},\infty)$. As a result, we can deduce an upper bound for $v_0(\cdot;\gamma)$ in $(\tilde{c},\infty)$ using the following decomposition: 
	\begin{align}\label{upper_b_power}
	\notag	v_0(x;\gamma)\le &C\left(\int_{0}^{\tilde{c}}p_0'(y;\gamma)\dd y\right)\left(\int_{0}^{\tilde{c}}\frac{\dd z}{z^\delta p_0'(z;\gamma)}\right)
		+C
	\left(\int_{0}^{\tilde{c}}\frac{\dd z}{z^\delta p_0'(z;\gamma)}\right)\left(	\int_{\tilde{c}}^{x}p_0'(y;\gamma)\dd y\right)
		\\&\quad +
	C	\int_{\tilde{c}}^{x}p_0'(y;\gamma)\left(\int_{\tilde{c}}^y\frac{\dd z}{z^\delta p_0'(z;\gamma)}\right)\dd y
		,\quad x>\tilde{c}.
	\end{align}
In particular, if we focus on the last addend in the right-hand side of \eqref{upper_b_power}, then, by Tonelli's theorem, \eqref{trick_proof} and the fact that $\tilde{c}>1$, 
\begin{align*}
		\int_{\tilde{c}}^{x}p_0'(y;\gamma)\left(\int_{\tilde{c}}^y\frac{\dd z}{z^\delta p_0'(z;\gamma)}\right)\dd y&\le 
			\int_{\tilde{c}}^{\infty }\frac{1}{p_0'(z;\gamma)}\left(\int_{{z}}^\infty \frac{g'(y)}{g'(y)}p_0'(y;\gamma)\dd y\right)\dd z\\
			&\le \frac{1}{C} \int_{\tilde{c}}^{\infty}\frac{1}{g'(z)}\dd z,\quad x>\tilde{c}.
\end{align*}
Plugging this estimate in \eqref{upper_b_power}, we obtain, for every $x>\tilde{c},$
\[
	v_0(x;\gamma)\le C \left(\int_{0}^{\tilde{c}}\frac{C\dd z}{z^\delta p_0'(z;\gamma)}\right)
	\left(\int_{0}^{\infty}p_0'(y;\gamma)\dd y\right)+
	 \int_{\tilde{c}}^{\infty}\frac{1}{g'(z)}\dd z
	<\infty.
\]
	Here, the two addends on the right-hand side do not depend on $x$ and are finite. Indeed,  $\delta<1$ and the maps $p_0'(\cdot;\gamma)$ and $(g')^{-1}$ are integrable at $\infty $ because $\alpha>1+\delta$. Therefore, $v_0(\infty-;\gamma)<\infty$ and the proof is complete.
\end{proof}

 \appendix
 \section{Proofs of some technical results}\label{Appendix_tech}
 In this appendix, we collect the proofs of  Lemma \ref{lemma_diff_c}, Corollary \ref{fineteness_c} and Lemma \ref{le_dip_betagamma} in Section \ref{main}.
 \vspace{2mm}
 \begin{myproof}{Lemma \ref{lemma_diff_c}}
 	Fix $\beta,\,\gamma\in \mathbb{R}$ and $c_1,\,c_2\in I$ such that $c_1<c_2$.
 	All the equalities in \eqref{p_dep_c} and \eqref{v_dep_c} directly follow from the definitions of $p_c(\cdot;\beta,\gamma)$, see \eqref{scale_func_def}, and $v_c(\cdot;\beta,\gamma)$, see  \eqref{v_def}-\eqref{v_def2}. Here, we only show how to obtain the second equation in \eqref{v_dep_c} as an example. 	To do this, we take $x\in (c_2,r)$ and compute, according to \eqref{scale_func_def}-\eqref{v_def2}, using also \eqref{def_bc} and omitting $\gamma$ from the notation,
 	\begin{align*}
 		\notag v_{c_1}(x)&=v_{c_1}(c_2)+2\int_{c_2}^{x}\exp\bigg\{-2\int_{c_1}^{y}\frac{\tilde{b}(r)+\frac{K'(0)}{K(0)}\gamma}{\tilde{\sigma}^2(r)}\dd r\bigg\}\bigg(
 		\int_{c_1}^{y}\exp\bigg\{2\int_{c_1}^{z}\frac{\tilde{b}(r)+\frac{K'(0)}{K(0)}\gamma}{\tilde{\sigma}^2(r)}\dd r\bigg\}\frac{1}{\tilde{\sigma}^2(z)}\dd z\bigg)\dd y
 		\\
 		&=v_{c_1}(c_2)
 		+
 		2p'_{c_1}(c_2)\int_{c_2}^{x}p'_{c_2}(y)
 		\left(\int_{c_1}^{c_2}
 		\frac{1}{p'_{c_1}(z)\tilde{\sigma}^2(z)}\dd z
 		+
 		\frac{1}{p'_{c_1}(c_2)}
 		\int_{c_2}^{y}\frac{1}{p'_{c_2}(z)\tilde{\sigma}^2(z)}\dd z
 		\right)\dd y\notag
 		\\&=
 		v_{c_1}(c_2)
 		+
 		2p'_{c_1}(c_2)p_{c_2}(x)
 		\int_{c_1}^{c_2}
 		\frac{1}{p'_{c_1}(z)\tilde{\sigma}^2(z)}\dd z
 		+
 		v_{c_2}(x)
 		=
 		v_{c_1}(c_2)
 		+
 		p_{c_2}(x)
 		v'_{c_1}(c_2)
 		+
 		v_{c_2}(x).
 	\end{align*}
 	This is the  expression for $v_{c_1}$ in $(c_2,r)$ that we are looking for.
 \end{myproof}
\vspace{2mm}
\begin{myproof}{Corollary \ref{fineteness_c}}
	Consider $\beta,\,\gamma\in \mathbb{R}$ and $c_1,\,c_2\in I$ such that $c_1<c_2$. We first focus on demonstrating that $v_{c_2}(r-;\gamma)=\infty$ is equivalent to  $v_{c_1}(r-;\gamma)=\infty$. The implication 
	\[
	v_{c_2}(r-;\gamma)=\infty\quad \Longrightarrow \quad v_{c_1}(r-;\gamma)=\infty
	\] 
	follows directly from the second equality in \eqref{v_dep_c}, which gives $v_{c_1}(\cdot;\gamma)\ge v_{c_2}(\cdot;\gamma)$ in $(c_2,r)$.  For the reverse one, namely 
	\[
	v_{c_1}(r-;\gamma)=\infty\quad \Longrightarrow \quad v_{c_2}(r-;\gamma)=\infty,
	\]
	we suppose that $v_{c_1}(r-;\gamma)=\infty$ and argue by cases, corresponding to $p_{c_2}(r-;\gamma)<\infty$ or  $p_{c_2}(r-;\gamma)=\infty$. In the former, i.e., $p_{c_2}(r-;\gamma)<\infty$, we rewrite  the second equation in \eqref{v_dep_c} as
	\[
	v_{c_2}(x;\gamma)=v_{c_1}(x;\gamma)-v_{c_1}(c_2;\gamma)	-
	v'_{c_1}(c_2;\gamma)		p_{c_2}(x;\gamma),\quad x \in (c_2,r).
	\] 
	Taking the limit as $x\uparrow r$ we obtain $v_{c_2}(r-;\gamma)=\infty$. In the latter case, i.e., $p_{c_2}(r-;\gamma)=\infty$, the fact that $v_{c_2}(r-;\gamma)$ is infinite is independent from the hypothesis $v_{c_1}(r-;\gamma)=\infty$. Indeed, fixing a generic $\bar{c}\in(c_2,r)$, by \eqref{v_def2} we compute, for $x\in (\bar{c},r),$
	\begin{align*}
		v_{c_2}(x;\gamma)&=v_{c_2}(\bar{c};\gamma)+2\int_{\bar{c}}^{x}p'_{c_2}(y;\gamma)\left(\int_{c_2}^{y}\frac{\dd z}{p'_{c_2}(z;\gamma)\tilde{\sigma}^2(z)}\right)\dd y
		\\&\ge2
		\left(\int_{c_2}^{\bar{c}}\frac{\dd z}{p'_{c_2}(z;\gamma)\tilde{\sigma}^2(z)}\right)\left(p_{c_2}(x;\gamma)-p_{c_2}(\bar{c};\gamma)\right)
		.
	\end{align*}
	Thus, we deduce that $v_{c_2}(r-;\gamma)=\infty$ by letting $x\uparrow r$.
	
	The equivalence $$v_{c_1}(l+;\beta)=\infty\quad \Longleftrightarrow\quad  v_{c_2}(l+;\beta)=\infty$$ can be argued analogously, using the second equality in \eqref{p_dep_c} (which entails that $v_{c_2}(\cdot;\beta)\ge v_{c_1}(\cdot;\beta)$ in $(l,c_1)$) and showing the implication $p_{c_1}(l+;\beta)=-\infty\Longrightarrow v_{c_1}(l+;\beta)=\infty$.
	Therefore the proof is complete.
\end{myproof}
\vspace{2mm}
\begin{myproof}{Lemma \ref{le_dip_betagamma}}
	Fix $c\in I$ and parameters $\beta_i,\,\gamma_i,\,i=1,2$, according to the statement of the lemma, namely, $\beta_1\ge \beta_2$ and $\gamma_1\le \gamma_2$. 
	Differentiating \eqref{scale_func_def}, employing that $K'(0)\le 0$ and recalling \eqref{def_bc}, we obtain
	\begin{align*}
		\frac{p'_c(y;\gamma_1)}{p'_c(z;\gamma_1)}&=\exp\bigg\{-2\int_{z}^{y}\frac{\tilde{b}(h)}{\tilde{\sigma}^2(h)}\dd h-2\gamma_1\frac{K'(0)}{K(0)}\int_{z}^{y}\frac{1}{\tilde{\sigma}^2(h)}\dd h\bigg\}\\
		&\le 
		\exp\bigg\{-2\int_{z}^{y}\frac{\tilde{b}(h)}{\tilde{\sigma}^2(h)}\dd h-2\gamma_2\frac{K'(0)}{K(0)}\int_{z}^{y}\frac{1}{\tilde{\sigma}^2(h)}\dd h\bigg\}
		=\frac{p'_c(y;\gamma_2)}{p'_c(z;\gamma_2)}
		,\quad c<z<y<r.
	\end{align*}
	Hence, by \eqref{v_def}-\eqref{v_def2}, $v_c(\cdot; \gamma_1)\le v_c(\cdot; \gamma_2)$ in $(c,r)$. Analogously, we demonstrate that the  inequality $v_c(\cdot; \beta_1)\le v_c(\cdot; \beta_2)$ holds in $(l,c)$.
	
	Suppose now that $\int_{l}^{r}{\tilde{\sigma}^{-2}(z)}\dd z<\infty$. Arguing as in the above computation yields, for all  $y,\,z\in I$ such that $l<y<z<c$ or $c<z<y<r$,
	\begin{align*}
		\frac{p'_c(y;\beta_2,\gamma_2)}{p'_c(z;\beta_2,\gamma_2)}
		\le 
		\exp\bigg\{2\max\{\beta_1-\beta_2,\gamma_2-\gamma_1\}\frac{|K'(0)|}{K(0)}\int_{l}^{r}\frac{1}{\tilde{\sigma}^2(h)}\dd h\bigg\}
		\frac{p'_c(y;\beta_1,\gamma_1)}{p'_c(z;\beta_1,\gamma_1)}
		\eqqcolon C \frac{p'_c(y;\beta_1,\gamma_1)}{p'_c(z;\beta_1,\gamma_1)}
		,
	\end{align*}
	whence the second conclusion of the lemma.  This completes the proof.
\end{myproof}

\end{document}